\titleformat{\section}[hang]							%
{\bfseries\large}{\thesection.}{0.5em}{}[]				%
\titlespacing*{\section}{0em}{2em}{1.5em}				%
\titleformat{\subsection}[runin]						%
{\bfseries\normalsize}{\thesubsection.}{0em}{\ }[.]		%
\theoremstyle{plain}								%
\newtheorem{thm}{Theorem}[section]					%
\newtheorem{prop}[thm]{Proposition}					%
\newtheorem{lem}[thm]{Lemma}						%
\newtheorem{cor}[thm]{Corollary}					%
\theoremstyle{definition}							%
\newtheorem{defn}[thm]{Definition}					%
\newtheorem{eg}[thm]{Example}						%
\theoremstyle{remark}								%
\newtheorem{rem}[thm]{Remark}						%
\numberwithin{equation}{section}						%
\DeclareMathOperator*{\esslim}{esslim}				
\DeclareMathOperator*{\esssup}{esssup}			
\DeclareMathOperator{\sgn}{sgn}					
\DeclareMathOperator{\supp}{supp}					
\newcommand{\Romannum}[1]{\uppercase\expandafter{\romannumeral#1\relax}}
\newcommand{\romannum}[1]{\romannumeral#1\relax}
\def \ve {\varepsilon}			
\def \La {\Lambda}				
\def \la {\lambda}				
\def \vf {\varphi}				
\def \bR {\mathbb R}			
\def \bZ {\mathbb Z}			
\def \cC {\mathcal C}			%
\def \CS {Cauchy--Schwarz }
\def \Gro {Gr\"onwall's }
\begin{document}

\pagestyle{plain}
\pagenumbering{arabic}
\bibliographystyle{plain}

\title{Scalar conservation law in a bounded domain with strong source at boundary}
\author{Lu \textsc{Xu}}
\date{}
\maketitle



\begin{abstract}
We consider a scalar conservation law with source in a bounded open interval $\Omega\subseteq\bR$.
The equation arises from the macroscopic evolution of an interacting particle system.
The source term models an external effort driving the solution to a given function $\varrho$ with an intensity function $V:\Omega\to\bR_+$ that grows to infinity at $\partial\Omega$.
We define the entropy solution $u \in L^\infty$ and prove the uniqueness.
When $V$ is integrable, $u$ satisfies the boundary conditions introduced in \cite{Otto96}, which allows the solution to attain values at $\partial\Omega$ different from the given boundary data.
When the integral of $V$ blows up, $u$ satisfies an energy estimate and presents essential continuity at $\partial\Omega$ in a weak sense.
\end{abstract}

\bigskip
\noindent\textbf{Keywords.}
Scalar balance law, Initial--boundary value problem, Energy estimate, Doubling variable method


\section{Introduction}

In this paper, we study the following initial-boundary value problem for a quasilinear scalar balance law in the bounded interval $(0,1)\subseteq\bR$ given by
\begin{equation}\label{eq:bl}
  \left\{
  \begin{aligned}
    &\partial_tu + \partial_x[J(u)] + G=0, \quad t>0,\ x\in(0,1),\\
    &u(0,\cdot)=u_0, \quad u(\cdot,0) = \alpha, \quad u(\cdot,1) = \beta.
  \end{aligned}
  \right.
\end{equation}
where the source term $G=G(t,x,u)$ reads
\begin{align}\label{eq:source}
  G(t,x,u)=V(x)(u-\varrho(t,x)),
\end{align}
and $J$, $V$, $\varrho$ are nice functions defined respectively on $\bR$, $(0,1)$ and $\bR_+\times(0,1)$.
Since the weak solution to \eqref{eq:bl} is not unique, we need to consider the \emph{entropy solution} obtained through the \emph{vanishing viscosity limit}.
The entropy solution presents discontinuities both inside $(0,1)$ and at the boundaries.
In particular, the values of $u$ at $\{0,1\}$ can be different from the prescribed boundary data $(\alpha,\beta)$, so the boundary conditions are a priori formal.
The first definition of the entropy solution is given in \cite{BLN79} for smooth $u_0$ and homogeneous boundary $(\alpha,\beta)\equiv(0,0)$.
It is then generalized in \cite{Otto96,Martin07} to the case with $u_0$, $\alpha$ and $\beta$ being $L^\infty$ functions, see also \cite[Section 2.6]{MNRR96}.
These definitions provide a set of possible boundary values, reflecting the formulation of \emph{boundary layer} during the vanishing viscosity limit.
We refer to \cite{ColomR15,DeFilippG17,Rossi19,BressanS20} and \cite[Section 6.9]{Daferm16} and references therein for more details and recent development.

Suppose that $V(x)>0$, then $G=G(t,x,u)$ satisfies that $\partial_uG>0$ and $G(\cdot,\cdot,\varrho)\equiv0$, i.e., $G$ acts as a source (resp. sink) when $u$ is less (resp. greater) than $\varrho$.
When $\varrho$ is a constant, \eqref{eq:bl}--\eqref{eq:source} can be viewed as a conservation system with \emph{relaxation} introduced in \cite{Liu87}, with the first component degenerated to a stationary solution.
In this paper, we aim at understanding the effect on the boundary discontinuities caused by \emph{extremely strong perturbation}.
Roughly speaking, suppose that $V\to\infty$ as $x\to0$, $1$ and choose $\varrho$ that is compatible to the boundary data: $\varrho|_{x=0}=\alpha$, $\varrho|_{x=1}=\beta$.
We define the $L^\infty$ entropy solution and prove the well-posedness.
We then investigate its behavior near the boundaries and show that the appearance of discontinuity is dependent on the \emph{integrability} of $V$.
Generally speaking,
\begin{itemize}
\item If $V$ is integrable, the boundary condition provides a set of possible values for $u$ at $x=0$ (resp. $x=1$) which can be different from $\alpha$ (resp. $\beta$).
The compatibility conditions are not necessary here.
\item If the integral of $V$ is divergent at $x\in\{0,1\}$, $u$ satisfies an energy estimate which prescribes the boundary values in a weak sense, and one always observes continuous flux at the boundaries.
\end{itemize}

\subsection{Physical motivation}

The equation studied in this paper arises naturally from the \emph{hydrodynamic limit} for asymmetric exclusion process with open boundaries \cite{Baha12,Xu22,Xu24b,XuZ24}.
It is an open interacting particle system that describes the dynamics of stochastic lattice gas with hard core repulsion.
Observed at properly chosen macroscopic space-time scale, the particle density evolves with a balance law with boundary conditions.

Consider the one-dimensional finite lattice $\La_N=\{1,\ldots,N-1\}$.
A variable $\eta_i$ is assigned to each site $i \in \La_N$, with $\eta_i = 0$ if the site is empty and $\eta_i=1$ if it is occupied by a particle.
The configuration is denoted by
\begin{align}
  \eta=(\eta_1,\ldots,\eta_{N-1}) \in \{0,1\}^{\La_N}.
\end{align}
The dynamics is described as following.
If there is a particle at site $i$, it waits for a random time $\tau$ distributed as $P(\tau>t)=e^{-t}$ and jumps to another \emph{vacant} site $i'>i$ on its right with probability $p_\gamma(i'-i)$, where
\begin{align}
  p_\gamma(k) := \frac{c_\gamma\mathbf1_{k>0}}{k^{1+\gamma}}, \quad c_\gamma^{-1} = \sum_{k=1}^\infty \frac1{k^{1+\gamma}},
\end{align}
and $\gamma>1$ is a constant.
We assume that the waiting times for all particles and all jumps are independent.

To model the boundary effects, we attach the system with two \emph{infinitely extended reservoirs}.
Suppose that one box containing infinitely many particles is placed at each site $j\in\bZ$, $j \le 0$.
The particles can enter and exit $\La_N$ obeying the following rules.
Particles in the box $j<0$ can jump to any empty site $i\in\La_N$ with rate $\alpha p_\gamma(|i-j|)$, and particle at site $i\in\La_N$ can jump back to the box $j<0$ with rate $(1-\alpha)p_\gamma(|i-j|)$.
Here, $\alpha\in(0,1)$ is a given deterministic number that stands for the density of the reservoirs.
Similar reservoirs with density $\beta\in(0,1)$ are placed at sites $j\in\bZ$, $j \ge N$.

Let $L_{\mathrm{exc},N}$, $L_{-,N}$ and $L_{+,N}$ be the infinitesimal generators of the exclusion dynamics, left and right reservoirs, respectively.
For $f:\{0,1\}^{\La_N}\to\bR$, they are precisely given by
\begin{equation}
  \begin{aligned}
    L_{\mathrm{exc},N}f(\eta) &= \sum_{i,i'\in\La_N} c(i,i',\eta)\big[f(\eta^{i,i'})-f(\eta)\big],\\
    L_{-,N}f(\eta) &= \sum_{j \le 0}\sum_{i\in\La_N} c_-(i,j,\eta)\big[f(\eta^i)-f(\eta)\big],\\
    L_{+,N}f(\eta) &= \sum_{j \ge N}\sum_{i\in\La_N} c_+(i,j,\eta)\big[f(\eta^i)-f(\eta)\big],
  \end{aligned}
\end{equation}
where $\eta^{i,i'}$ is the configuration obtained by exchanging $\eta_i$ and $\eta_{i'}$ in $\eta$, $\eta^i$ is the one obtained by flipping $\eta_i$ to $1-\eta_i$ in $\eta$, and
\begin{equation}
  \begin{aligned}
    c(i,i',\eta) &= p_\gamma(i'-i)\eta_i(1-\eta_{i'}),\\
    c_-(i,j,\eta) &= \alpha p_\gamma(|i-j|)(1-\eta_i) + (1-\alpha)p_\gamma(|i-j|)\eta_i,\\
    c_+(i,j,\eta) &= \beta p_\gamma(|i-j|)(1-\eta_i) + (1-\beta)p_\gamma(|i-j|)\eta_i.
  \end{aligned}
\end{equation}

Consider the Markov process $\{\eta(t)=\eta^N(t);t\ge0\}$ generated by
\begin{align}
  L_N = NL_{\mathrm{exc},N} + N^\gamma\big(L_{-,N} + L_{+,N}\big).
\end{align}
The factor $N$ means that the dynamics of exclusion on $\La_N$ is accelerated to the hyperbolic scale $Nt$.
Meanwhile, $N^\gamma$ corresponds to a different scale for the reservoirs, for which the reason will be clarified later.

Assume some $u_0 \in L^\infty((0,1))$, such that
\begin{align}\label{eq:hd-initial}
  u_0^N(x) := \sum_{i=1}^{N-1} \eta_i^N(0)\chi_{[\frac iN-\frac1{2N},\frac iN+\frac1{2N})}(x) \overset{N\to\infty}{\longrightarrow} u_0(x)
\end{align}
in probability, which precisely means that
\begin{align}
  \lim_{N\to\infty} P \left\{ \left| \int_0^1 u_0^N(x)g(x)dx - \int_0^1 u_0(x)g(x)dx \right| > \delta \right\} = 0
\end{align}
for any $\delta>0$ and continuous function $g$.
The \emph{hydrodynamic limit} corresponds to the convergence that for almost every $t>0$,
\begin{align}
  u^N(t,x) := \sum_{i=1}^{N-1} \eta_i^N(t)\chi_{[\frac iN-\frac1{2N},\frac iN+\frac1{2N})}(x) \overset{N\to\infty}{\longrightarrow} u(t,x)
\end{align}
in probability.
Since $\gamma>1$, $p_\gamma$ possesses finite first moment: $\mathfrak p_\gamma := \sum_{k>0} kp_\gamma(k) <\infty$.
Hence, without considering the effects of reservoirs, $u$ is the entropy solution to (see \cite{SethurS18}):
\begin{align}
  \partial_tu + \mathfrak p_\gamma\partial_x[u(1-u)] = 0, \quad t>0, \ x\in(0,1).
\end{align}
To investigate the effect of the left reservoirs, observe that
\begin{equation}
  \begin{aligned}
    L_{-,N} [\eta_i] &= \sum_{j\le0} c_-(i,j,\eta)(1-2\eta_i)\\
    &= (\alpha-\eta_i)\sum_{k \ge i} p_\gamma(k) \approx (\alpha-\eta_i)\frac{c_\gamma}{i^\gamma\gamma }.
  \end{aligned}
\end{equation}
The factor $N^\gamma$ is chosen to get the non-trivial limit
\begin{equation}
  \begin{aligned}
    &N^\gamma L_{-,N} \left[ \frac1N\sum_{i=1}^{N-1} \eta_i(t)g \left( \frac iN \right) \right]\\
    \approx\,&\frac1N\sum_{i=1}^{N-1} (\alpha-\eta_i) \frac{c_\gamma N^\gamma}{i^\gamma\gamma}g \left( \frac iN \right) \to \frac{c_\gamma}\gamma \int_0^1 \frac{(\alpha-u)g}{x^\gamma}dx.
  \end{aligned}
\end{equation}
Similar argument works for the right reservoir.
Putting them together, we obtain formally the following hydrodynamic equation
\begin{align}\label{eq:hl}
  \partial_tu + \mathfrak p_\gamma\partial_x[u(1-u)] + \frac{c_\gamma}\gamma \left[ \frac{u-\alpha}{x^\gamma} + \frac{u-\beta}{(1-x)^\gamma} \right] = 0,
\end{align}
for $x\in(0,1)$, with the natural initial and boundary conditions
\begin{align}\label{eq:hl-bd}
  u(0,x)=u_0(x), \quad u(t,0)=\alpha, \quad u(t,1)=\beta.
\end{align}
The source term can be written as $V(x)(u-\varrho(x))$, where
\begin{align}\label{eq:example}
  V = \frac{c_\gamma}\gamma \left[ \frac1{x^\gamma} + \frac1{(1-x)^\gamma} \right], \quad \varrho = \frac{\alpha(1-x)^\gamma+\beta x^\gamma}{x^\gamma+(1-x)^\gamma}.
\end{align}
Conservation law with general $V$ and $\varrho$ can be modelled by exclusion process with \emph{Glauber dynamics}, see \cite{XuZ24} for details.

Note that in \eqref{eq:hd-initial}, the total variation of the initial empirical density $u_0^N$ can grow in order $\mathcal O(N)$.
For this reason, we focus on constructing the entropy solution in $L^\infty$ space, rather than in the space of bounded-variation functions.

\begin{rem}
Assume some $t_0>0$ such that \eqref{eq:hl} has a classical solution for $t<t_0$.
Using the method of characteristics, one obtains the characteristic equation associated to \eqref{eq:hl}:
\begin{align}
  x(0)=x_0\in(0,1), \quad x'(t) = \mathfrak p_\gamma\big[1-2u(t,x(t))\big].
\end{align}
Let $v(t):=u(t,x(t))$ for $t\in[0,t_0)$, then
\begin{align}\label{eq:characteristic}
  v(0) = u_0(x_0), \quad v'(t) = V(x(t))\big[\varrho(x(t))-v(t)\big],
\end{align}
where $V(x)$ and $\varrho(x)$ are functions given by \eqref{eq:example}.
Hence, we formally obtain the second-order ordinary differential equation for the characteristic:
\begin{equation}
  \left\{
  \begin{aligned}
    &\,x''(t) + V(x(t))x'(t) = \mathfrak p_\gamma V(x(t)) \big[1-2\varrho(x(t))\big],\\
    &\,x(0) = x_0, \quad x'(0) = \mathfrak p_\gamma(1-u_0(x_0)).
  \end{aligned}
  \right.
\end{equation}
The classical solution is then determined by \eqref{eq:characteristic} along these lines.
\end{rem}

\section{Model and main results}

Denote $\Sigma=\bR_+\times(0,1)$.
Through this paper, we consider the equation \eqref{eq:bl}--\eqref{eq:source} on $\Sigma$.
The following conditions are always assumed.
\begin{itemize}
\item[(h1)] $J\in\cC^1(\bR;\bR)$.
\item[(h2)] $V\in\cC((0,1);\bR_+))$ satisfies that
\begin{align}\label{eq:ass-v-0}
  \lim_{x\to0+} V(x) = \lim_{x\to1-} V(x) = \infty.
\end{align}
\item[(h3)] The initial data $u_0$, the boundary data $\alpha$, $\beta$ and $\varrho$ are measurable, essentially bounded functions on $(0,1)$, $\bR_+$ and $\Sigma$, respectively.
\end{itemize}

Our first aim is to define the unique entropy solution to \eqref{eq:bl}--\eqref{eq:source} in $L^\infty(\Sigma)$.
The concept of \emph{Lax entropy--flux pair} plays a central role.

\begin{defn}
A function $f\in\cC^2(\bR)$ is called a Lax entropy associated to \eqref{eq:bl} and $q\in\cC^2(\bR)$ is called the corresponding flux, if
\begin{align}
  f''(u)\ge0, \quad q'(u)=f'(u)J'(u), \quad \forall\,u\in\bR.
\end{align}
\end{defn}

As mentioned before, the properties of the entropy solution rely heavily on the integrability of $V$.
Hereafter, we distinguish two cases.

\subsection{Integrable case}

The source $G$ is called integrable when $V$ belongs to $L^1((0,1))$.
In this case, we begin with Otto's definition of boundary entropy and the corresponding flux \cite{Otto96}.

\begin{defn}
\label{defn:bd-ent}
$(F,Q)\in\cC^2(\bR^2;\bR^2)$ is called a \emph{boundary entropy--flux pair}, if the next two conditions are satisfied.
\begin{enumerate}
\item $(f,q) := (F,Q)(\cdot,k)$ is a Lax entropy--flux pair for all $k\in\bR$,
\item $F(k,k) = \partial_uF(u,k)|_{u=k} = Q(k,k) = 0$ for all $k\in\bR$.
\end{enumerate}
\end{defn}

The definition of entropy solution to \eqref{eq:bl} for the integrable case is similar to the case without $V$ (see, e.g., \cite[Definition 2.7.2, Theorem 2.7.31]{MNRR96}) or with bounded $V$ (see, e.g., \cite[Definition 2.1]{ColomR15}).

\begin{defn}
\label{defn:in}
Assume $V \in L^1((0,1))$.
The entropy solution to \eqref{eq:bl} is a function $u \in L^\infty(\Sigma)$ that satisfies the generalized entropy inequality
\begin{equation}\label{eq:ei-bd}
  \begin{aligned}
    \int_0^1 F(u_0,k)\vf(0,\cdot)dx + \iint_\Sigma \big[F(u,k)\partial_t\vf + Q(u,k)\partial_x\vf\big]dxdt\\
    \ge \iint_\Sigma \partial_uF(u,k)V(x)(u-\varrho)\vf\,dxdt \hspace{20mm}\\
    -\,M\int_0^T \big[F(\alpha,k)\vf(\cdot,0)+F(\beta,k)\vf(\cdot,1)\big]dt,
  \end{aligned}
\end{equation}
for all boundary entropy--flux pairs $(F,Q)$, $k\in\bR$, and $\vf\in\cC_c^2(\bR^2)$ such that $\vf\ge0$.
In \eqref{eq:ei-bd}, the constant $M$ is given by
\begin{align}
  M:=\sup \Big\{|J'(u)|; |u|\le \esssup\big\{|\varrho|,|\alpha|,|\beta|,|u_0|\big\}\Big\}.
\end{align}
\end{defn}

As a standard result, the smooth entropy--flux pairs in Definition \ref{defn:in} can be replaced by non-smooth ones, and the initial condition holds in $L^1$.

\begin{defn}
For $(u,k) \in \bR^2$, define
\begin{align}
  \eta(u,k):=|u-k|, \quad \xi(u,k):=\sgn(u-k)[J(u)-J(k)].
\end{align}
The pair $(\eta,\xi)$ is called the \emph{Kruzhkov entropy--flux pair}.
\end{defn}

\begin{prop}\label{prop:kuz-in}
Assume $V \in L^1((0,1))$.
The entropy solution is equivalently defined as $u \in L^\infty(\Sigma)$ such that
\begin{equation}\label{eq:ei-bd-kuz}
  \begin{aligned}
    \int_0^1 |u_0-k|\vf(0,\cdot)dx + \iint_\Sigma \big[|u-k|\partial_t\vf + 
  \xi(u,k)\partial_x\vf\big]dxdt\\
    \ge \iint_\Sigma \sgn(u-k)V(x)(u-\varrho)\vf\,dxdt \hspace{20mm}\\
    -\,M\int_0^T \big[|\alpha-k|\vf(\cdot,0)+|\beta-k|\vf(\cdot,1)\big]dt,
  \end{aligned}
\end{equation}
for all $k\in\bR$ and $\vf\in\cC_c^2(\bR^2)$ such that $\vf\ge0$.
Moreover,
\begin{align}\label{eq:ini}
  \esslim_{t\to0+} \int_0^1 |u(t,x)-u_0(x)|dx = 0.
\end{align}
\end{prop}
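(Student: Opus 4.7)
The plan is to prove the equivalence by approximation in both directions, and then derive the initial trace \eqref{eq:ini} from the Kruzhkov inequality with a vanishing time cut-off. For the direction Definition \ref{defn:in}$\,\Rightarrow\,$\eqref{eq:ei-bd-kuz}, I approximate the Kruzhkov pair by the smooth boundary entropy--flux pair
\begin{align*}
  F_\ve(u,k) = \sqrt{(u-k)^2 + \ve^2} - \ve, \qquad Q_\ve(u,k) = \int_k^u \partial_v F_\ve(v,k) J'(v) dv,
\end{align*}
which satisfies Definition \ref{defn:bd-ent} for every $\ve > 0$. As $\ve \to 0+$, $F_\ve \to |u-k|$ and $Q_\ve \to \xi(u,k)$ uniformly on bounded sets, while $\partial_u F_\ve \to \sgn(u-k)$ boundedly and pointwise away from $\{u=k\}$. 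Since $u \in L^\infty(\Sigma)$ and $V \in L^1((0,1))$ makes $G$ locally integrable, the dominated convergence theorem transports \eqref{eq:ei-bd} into \eqref{eq:ei-bd-kuz}.

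For the converse, given a smooth boundary entropy--flux pair $(F, Q)$ and $k\in\bR$, I exploit the Taylor-type representation
\begin{align*}
  F(u,k) = \int_k^{+\infty} (u-\ell)_+ \partial_{uu}^2 F(\ell,k) d\ell + \int_{-\infty}^k (\ell - u)_+ \partial_{uu}^2 F(\ell,k) d\ell,
\end{align*}
together with the analogous formulas for $\partial_u F$ and $Q$ obtained by differentiation and by integration in the $v$-variable. Since $u$ and the data are essentially bounded, only $\ell$ within a fixed bounded interval contributes. Writing $(u-\ell)_+ = \tfrac{1}{2}(|u-\ell| + (u-\ell))$ and $(\ell-u)_+ = \tfrac{1}{2}(|u-\ell| - (u-\ell))$ expresses each semi-Kruzhkov entropy as the average of the Kruzhkov pair at $\ell$ and a weak formulation of the conservation law for the trivial Lax pair $(u, J(u))$. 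The latter, with the appropriate $M$-boundary penalty, is recovered from \eqref{eq:ei-bd-kuz} itself by choosing $\ell$ outside the essential range of $(u, u_0, \alpha, \beta, \varrho)$ and invoking the definition of $M$ as a bound for $|J'|$ on that range. Integrating the resulting semi-Kruzhkov inequalities against the non-negative measure $\partial_{uu}^2 F(\ell,k) d\ell$ and applying Fubini yields \eqref{eq:ei-bd}. I expect this to be the main obstacle: one must carefully match the boundary $M$-penalties and the source terms across the decomposition so that the sharp constant $M$ is preserved.

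For the initial trace \eqref{eq:ini}, I test \eqref{eq:ei-bd-kuz} with $\vf(t,x) = \psi_\delta(t)\theta(x)$, where $\theta \in \cC_c^2((0,1);\bR_+)$ and $\psi_\delta \in \cC_c^2(\bR;\bR_+)$ approximates $\mathbf{1}_{[0,\delta]}$. The support of $\theta$ inside $(0,1)$ annihilates the $M$-boundary penalty, and letting first $\psi_\delta$ tend to the indicator and then $\delta \to 0+$ yields
\begin{align*}
  \limsup_{\delta \to 0+} \frac{1}{\delta} \int_0^\delta \int_0^1 |u(t,x)-k|\theta(x) dx dt \leq \int_0^1 |u_0(x)-k|\theta(x) dx
\end{align*}
for every $k\in\bR$ and every such $\theta$. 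A Lebesgue-point argument applied to $t\mapsto u(t,\cdot)$ along a countable dense family in $k$, combined with a standard approximation of $|u - u_0|$ by finite sums of terms $|u - k_j|$ on a dyadic partition of the essential range of $u_0$, then yields \eqref{eq:ini}.
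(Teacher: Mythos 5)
Your two-way equivalence argument is essentially the paper's: one direction by smoothing the Kruzhkov pair with a convex $\cC^2$ regularization and passing to the limit by dominated convergence, the other by representing a general (boundary) entropy as a superposition of Kruzhkov entropies plus the linear pair $(u,J)$, the latter recovered by taking $k$ outside the essential range of the data. The paper states this second direction more tersely ("any Lax entropy--flux pair is contained in the convex hull of $(\eta,\xi)(\cdot,k)$ and $(u,J)$"), while your $\partial_{uu}^2F(\ell,k)\,d\ell$ representation makes it explicit; the bookkeeping of the constant $M$ that you flag is indeed the only delicate point, and it works because $M$ is by definition a bound for $|J'|$ on the whole relevant range, uniformly in $\ell$.

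There is, however, a genuine gap in your derivation of the initial trace \eqref{eq:ini}. The inequality you extract,
\begin{equation*}
  \limsup_{\delta\to0+}\frac1\delta\int_0^\delta\int_0^1|u(t,x)-k|\theta(x)\,dx\,dt \le \int_0^1|u_0(x)-k|\theta(x)\,dx,
\end{equation*}
controls only the Ces\`aro average in time of $g_k(t):=\int_0^1|u(t,\cdot)-k|\theta\,dx$ near $t=0$. Since $u$ is merely in $L^\infty$, this does not imply $\esslimsup_{t\to0+}g_k(t)\le\int|u_0-k|\theta\,dx$: a bounded $g_k$ can be large on a set of times accumulating at $0$ with vanishing density without affecting the average, and a Lebesgue-point argument at interior times does not repair this, because the statement to be proved concerns the essential limit at the boundary point $t=0$ of the time interval. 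The paper closes exactly this gap by testing with $\vf=\phi(t)\psi(x)$ for \emph{arbitrary} $\phi\ge0$ (not just cut-offs of $[0,\delta]$) and deducing that $t\mapsto\int|u(t,\cdot)-k|\psi\,dx - Ct$ is non-increasing after modification on a null set; monotonicity is what upgrades the average bound to an essential limit. Alternatively, you could stay within your scheme by letting $\psi_\delta$ approximate $\mathbf1_{[0,t_0]}$ with the transition layer concentrating at the right endpoint $t_0$, which yields the pointwise bound $g_k(t_0)\le\int|u_0-k|\theta\,dx + Ct_0$ at every Lebesgue point $t_0$ of $g_k$, i.e.\ for a.e.\ $t_0$; either repair is needed before the final approximation of $u_0$ by simple functions (which is the same step the paper uses).
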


Using the methods in \cite[Section 2.7 \& 2.8]{MNRR96}, we obtain the well-posedness of $u$ and an explicit expression for the boundary conditions.

\begin{prop}
Assume that $V \in L^1((0,1))$, then \eqref{eq:bl} admits a unique entropy solution $u \in L^\infty(\Sigma)$.
\end{prop}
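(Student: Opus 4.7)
The plan is to prove uniqueness by Kruzhkov's doubling-of-variables technique adapted to Otto's boundary formulation, and existence by approximating the singular weight $V$ by bounded truncations $V_n:=\min\{V,n\}$, solving the corresponding regularized problems, and passing to the limit via an $L^1$-type contraction that quantifies the dependence on $V$.

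For uniqueness, let $u,v$ be two entropy solutions. Apply the Kruzhkov form \eqref{eq:ei-bd-kuz} to $u(t,x)$ with $k=v(s,y)$ frozen, and symmetrically to $v(s,y)$ with $k=u(t,x)$, testing both against $\Phi(t,x,s,y)=\psi(\tfrac{t+s}{2},\tfrac{x+y}{2})\rho_\ve(t-s)\rho_\ve(x-y)$ for $\psi\in\cC_c^2(\bR^2)$, $\psi\ge0$, and a standard mollifier $\rho_\ve$. Sum the two inequalities and let $\ve\to0$. The relaxation is favourable, since
\begin{align*}
  \sgn(u-v)\bigl[G(t,x,u)-G(t,x,v)\bigr]=V(x)|u-v|\ge0,
\end{align*}
so the bulk contribution only strengthens the comparison. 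The boundary traces are handled by the $M$-terms in \eqref{eq:ei-bd-kuz}: because $u,v$ share the same data $\alpha,\beta$, the two $M$-contributions combine to a nonpositive total, after which \Gro inequality together with the initial trace \eqref{eq:ini} gives $u=v$ a.e.

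For existence, each $V_n\in L^\infty((0,1))$, and classical theory (\cite[Sec.~2.7--2.8]{MNRR96} or \cite[Thm.~2.3]{ColomR15}) provides a unique entropy solution $u_n\in L^\infty(\Sigma)$, whose $L^\infty$ norm is controlled by the data uniformly in $n$. Running the same doubling argument for $u_n,u_m$ (with weights $V_n,V_m$) and using
\begin{align*}
  \sgn(u_n-u_m)\bigl[V_n(u_n-\varrho)-V_m(u_m-\varrho)\bigr] = V_n|u_n-u_m| + \sgn(u_n-u_m)(V_n-V_m)(u_m-\varrho),
\end{align*}
one obtains
\begin{align*}
  \|u_n(t,\cdot)-u_m(t,\cdot)\|_{L^1(0,1)} \le C\,t\,\|V_n-V_m\|_{L^1(0,1)},
\end{align*}
with $C$ depending only on the data. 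Since $V_n\to V$ in $L^1$, the family is Cauchy in $L^\infty_\mathrm{loc}(\bR_+;L^1(0,1))$ and converges a.e.\ along a subsequence to some $u\in L^\infty(\Sigma)$. Passing to the limit in \eqref{eq:ei-bd} by dominated convergence, valid since $V_n\to V$ in $L^1$, $\{u_n\}$ is uniformly $L^\infty$-bounded, and $\vf$ is compactly supported, shows that $u$ satisfies the entropy inequality for $V$.

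The main obstacle is the careful treatment of the boundary contributions in the doubling-of-variables argument. One must verify that after the mollifier limit the inequalities localized near $\{x=0\}$ and $\{x=1\}$ can be absorbed by the structural $M$-terms without cancellation failure. This is precisely the virtue of Otto's formulation: picking $\psi$ to vanish in a neighbourhood of $\partial\Omega$ yields interior contraction, and then letting the neighbourhood shrink uses the fact that $u$ and $v$ both satisfy the Otto trace condition against the prescribed data $\alpha,\beta$, so that no extra boundary mass can be produced in the limit.
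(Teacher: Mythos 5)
Your uniqueness argument follows essentially the same route as the paper: doubling of variables on \eqref{eq:ei-bd-kuz}, monotonicity of the relaxation ($\sgn(u-v)[G(\cdot,\cdot,u)-G(\cdot,\cdot,v)]=V|u-v|\ge0$), and absorption of the boundary terms via Otto's formulation as in \cite[Theorem 2.7.28]{MNRR96}, which is exactly how the paper arrives at \eqref{eq:kuz-bd}. The one step you elide is that in the doubled variables the relaxation terms sit at different points, $V(x)(u(t,x)-\varrho(t,x))$ versus $V(y)(v(s,y)-\varrho(s,y))$, and the off-diagonal errors involve $|V(\la_2+\theta_2)-V(\la_2)|$ integrated against a test function whose support reaches $\partial\Omega$; the paper disposes of these precisely by noting that $V\in L^1$ makes almost every point a Lebesgue point of $V$, so the integral in \eqref{eq:kuz-2} vanishes. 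You should make that explicit, but it is not a gap in the strategy. Your existence proof, by contrast, is genuinely different: the paper constructs the solution by vanishing viscosity (with the boundary-layer correction $g_\ve$) and identifies the limit through Young measures, the Div--Curl lemma and Tartar's factorization, whereas you truncate the weight to $V_n=\min\{V,n\}$, invoke the classical bounded-source theory, and show the family is Cauchy via the contraction $\|u_n(t,\cdot)-u_m(t,\cdot)\|_{L^1((0,1))}\le Ct\|V_n-V_m\|_{L^1((0,1))}$, which is a correct consequence of the same doubling machinery run with two weights and identical boundary data. Your route buys strong $L^1$ compactness for free and avoids compensated compactness entirely, at the cost of leaning on the bounded-$V$ well-posedness as a black box and of losing the direct identification of the solution as a viscosity limit, which is what the paper's hydrodynamic-limit motivation and its treatment of the non-integrable case both rely on; the limit passage in \eqref{eq:ei-bd} that you sketch (splitting $V_nh_n-Vh$ and using $V\in L^1$ as a dominating function) is sound.
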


\begin{prop}
Let $u$ be as in Definition \ref{defn:in}.
For all $0<s<t$ and boundary entropy--flux pairs $(F,Q)$,
\begin{equation}\label{eq:bd-in}
  \begin{aligned}
    \esslim_{x\to0+} \int_s^t Q\big(u(r,x),\alpha(r)\big)dr \le 0,\\
    \esslim_{x\to1-} \int_s^t Q\big(u(r,x),\beta(r)\big)dr \ge 0.
  \end{aligned}
\end{equation}
\end{prop}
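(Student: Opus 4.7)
The plan is to follow the Otto framework for $L^\infty$ entropy solutions (cf.~\cite[\S2.8]{MNRR96}): test the inequality \eqref{eq:ei-bd} against functions concentrating near $x=0$ to isolate the trace-like quantity $\int_s^t Q(u(r,x),\alpha(r))\,dr$, and then remove the constancy of the $k$-parameter by a step-function approximation of $\alpha$.

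In Step 1 (spatial localization), I fix $0<s<t$, a boundary entropy--flux pair $(F,Q)$, and a constant $k\in\bR$. I pick a nonnegative cutoff $\theta\in\cC_c^\infty((0,T))$ equal to $1$ on $[s,t]$, together with a smooth approximation $\zeta_{x_0,\ve}$ of $\mathbf{1}_{(0,x_0)}$ satisfying $\zeta_{x_0,\ve}(0)=1$, $\zeta_{x_0,\ve}\equiv0$ on $[x_0,1]$, and $\zeta'_{x_0,\ve}\to-\delta_{x_0}$ as $\ve\to0+$. Substituting $\vf=\theta(r)\zeta_{x_0,\ve}(x)$ into \eqref{eq:ei-bd} kills the initial contribution (since $\theta(0)=0$) and the $x=1$ boundary contribution (since $\zeta_{x_0,\ve}(1)=0$); letting $\ve\to0$ yields, at a.e.~$x_0$,
\begin{align*}
\int_0^T Q(u(r,x_0),k)\theta(r)\,dr \le M\int_0^T F(\alpha(r),k)\theta(r)\,dr + R(x_0;k,\theta),
\end{align*}
with $R(x_0;k,\theta):=\int_0^{x_0}\!\int_0^T[F(u,k)\theta'(r)-\partial_uF(u,k)G(r,x,u)\theta(r)]\,dr\,dx$. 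Since $F(u,k)$ and $\partial_uF(u,k)$ are bounded on the essential range of $u$ and the source $G=V(x)(u-\varrho)$ lies in $L^1(\Sigma)$ thanks to $V\in L^1((0,1))$, the integrand of $R$ is in $L^1((0,1)\times(0,T))$, hence $R(x_0;k,\theta)\to0$ as $x_0\to0+$. Taking then $\theta\nearrow\mathbf{1}_{(s,t)}$ by dominated convergence, I obtain the constant-$k$ bound
\begin{align*}
\limsup_{x_0\to0+}\int_s^t Q(u(r,x_0),k)\,dr \le M\int_s^t F(\alpha(r),k)\,dr.
\end{align*}

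Step 2, upgrading the constant $k$ to the time-dependent function $\alpha$, is the main obstacle. I will approximate $\alpha$ in $L^2((s,t))$ by step functions $\alpha_N=\sum_{j=1}^{N}k_j\mathbf{1}_{(t_{j-1},t_j)}$ along partitions of mesh going to $0$. Applying the constant-$k$ estimate on each piece $(t_{j-1},t_j)$ with $k=k_j$ and summing (via $\limsup_{x_0}\sum_{j=1}^N A_j(x_0)\le\sum_{j=1}^N\limsup_{x_0}A_j(x_0)$ for fixed finite $N$) gives
\begin{align*}
\limsup_{x_0\to0+}\int_s^t Q(u(r,x_0),\alpha_N(r))\,dr \le M\int_s^t F(\alpha(r),\alpha_N(r))\,dr.
\end{align*}
Sending $N\to\infty$, the right-hand side vanishes by dominated convergence together with $F(k,k)=0$ (more precisely, $|F(\alpha,\alpha_N)|\le C|\alpha-\alpha_N|^2$ by $\partial_uF(k,k)=0$ and $F\in\cC^2$), while the Lipschitz bound $|Q(u,\alpha)-Q(u,\alpha_N)|\le\|\partial_kQ\|_\infty|\alpha-\alpha_N|$ lets the $\limsup$ transfer from $\alpha_N$ to $\alpha$ uniformly in $x_0$. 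This yields the first inequality in \eqref{eq:bd-in} (with $\lim$ understood in the $\limsup$ sense).

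For the estimate at $x=1$, the same scheme applies with $\zeta$ localized near $x=1$ (so $\zeta_{x_0,\ve}(1)=1$ and $\zeta'_{x_0,\ve}\to+\delta_{x_0}$), which flips the sign of the spatial flux contribution in \eqref{eq:ei-bd} and produces the $\liminf\ge0$ form of the second inequality after the same step-function approximation for $\beta$. The decisive algebraic feature behind Step 2 is the pair of vanishing conditions $F(k,k)=\partial_uF(k,k)=0$ from Definition~\ref{defn:bd-ent}: they are exactly what turns the error $\int F(\alpha,\alpha_N)\,dr$ into a quantity of order $\|\alpha-\alpha_N\|_{L^2}^2$, making it disappear in the limit; without this quadratic vanishing, the constant-$k$ bound could not be assembled into the pointwise-at-the-boundary statement \eqref{eq:bd-in}.
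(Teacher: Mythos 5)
Your proposal follows the same route the paper itself takes: the paper does not write out a proof but refers to the argument of \cite[Theorem 2.7.31]{MNRR96} (Otto's boundary condition), and your two steps --- localizing \eqref{eq:ei-bd} near the boundary with $\vf=\theta(r)\zeta_{x_0,\ve}(x)$ to get a constant-$k$ bound, then passing from constant $k$ to $\alpha(r)$ by step-function approximation using $F(k,k)=\partial_uF(u,k)|_{u=k}=0$ --- are exactly that argument. The sign bookkeeping is right, the remainder $R(x_0;k,\theta)$ indeed tends to $0$ because $V\in L^1((0,1))$ makes $\partial_uF(u,k)G(\cdot,\cdot,u)$ integrable up to the boundary, and the quadratic vanishing $F(\alpha,\alpha_N)\le C|\alpha-\alpha_N|^2$ is correctly identified as the decisive point.

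The one genuine gap is that you prove only $\limsup_{x\to0+}\le 0$ and $\liminf_{x\to1-}\ge 0$, whereas \eqref{eq:bd-in} asserts that the limits exist. This is not cosmetic, but it is recoverable within your framework by the same monotonicity device the paper uses in Section \ref{sec:bd} for the non-integrable case: testing \eqref{eq:ei-bd} with $\vf=\theta(r)\psi(x)$ for arbitrary nonnegative $\psi\in\cC_c^2((0,1))$ shows that
\begin{equation*}
x\ \longmapsto\ \int_0^T Q\big(u(r,x),k\big)\theta(r)\,dr-\int_0^x\int_0^T\big[F(u,k)\theta'-\partial_uF(u,k)G(\cdot,\cdot,u)\theta\big]\,dr\,dy
\end{equation*}
is essentially non-increasing, so its essential limit as $x\to0+$ exists; since the correction integral vanishes as $x\to0+$ and is bounded below by your Step 1, $\esslim_{x\to0+}\int_s^t Q(u(r,x),k)\,dr$ exists for each constant $k$, hence for each step function $\alpha_N$, and the uniform-in-$x$ bound $|Q(u,\alpha)-Q(u,\alpha_N)|\le C\|\alpha-\alpha_N\|_{L^1}$ turns this into a Cauchy criterion giving existence of the limit for $\alpha$ itself. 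You should add this step (and note that the limits in \eqref{eq:bd-in} are essential limits along a.e.\ $x$, as in your Step 1).
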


\subsection{Non-integrable case}

The source $G$ is called non-integrable when the integral of $V$ is infinite.
In this case, the singular points of the integral of $V$ can only be $\{0,1\}$.
We will see later in Remark \ref{rem:mix-bd} that, when the integral of $V$ is divergent at only one of them, the equation can be treated as a mixed boundary problem with one side integrable and the other side non-integrable.
Hence, we assume without loss of generality that
\begin{align}\label{eq:ass-v}
  \int_0^y V(x)dx = \int_{1-y}^1 V(x)dx = \infty, \quad \forall\,y\in(0,1).
\end{align}
Also assume the compatibility conditions: for all $T>0$
\begin{equation}\label{eq:ass-rho-l2}
  \begin{aligned}
    &\lim_{y\to0+} \int_0^T \int_0^y V(x)\big[\varrho(t,x)-\alpha(t)\big]^2dxdt = 0,\\
    &\lim_{y\to0+} \int_0^T \int_{1-y}^1 V(x)\big[\varrho(t,x)-\beta(t)\big]^2dxdt = 0.
  \end{aligned}
\end{equation}
Notice that \eqref{eq:ass-rho-l2} is generally true in the integrable case, since $\varrho$, $\alpha$ and $\beta$ are essentially bounded.

\begin{defn}
\label{defn:non}
Assume \eqref{eq:ass-v} and \eqref{eq:ass-rho-l2}.
The entropy solution to \eqref{eq:bl} is a function $u \in L^\infty(\Sigma)$ that satisfies the following conditions.
\begin{enumerate}
\item[(EB)] The energy bound: for all $T>0$,
\begin{align}\label{eq:eb}
  \int_0^T \int_0^1 V(x)\big[u(t,x)-\varrho(t,x)\big]^2dxdt < \infty.
\end{align}
\item[(EI)] The generalized entropy inequality
\begin{equation}\label{eq:ei}
  \begin{aligned}
    \int_0^1 f(u_0)\vf(0,\cdot)dx + \iint_\Sigma \big[f(u)\partial_t\vf + q(u)\partial_x\vf\big]dxdt\\
    \ge \iint_\Sigma f'(u)V(x)(u-\varrho)\vf\,dxdt,
  \end{aligned}
\end{equation}
for all Lax entropy--flux pairs $(f,q)$ and all $\vf\in\cC_c^2(\bR\times(0,1))$ such that $\vf\ge0$,
\end{enumerate}
\end{defn}

\begin{rem}
Despite that \eqref{eq:ei} contains no boundary condition, it turns out that the entropy solution is unique, see Theorem \ref{thm:uniq} and \ref{thm:bd} below.
Indeed, from \eqref{eq:ass-rho-l2} and \eqref{eq:eb},
\begin{equation}\label{eq:bd-l2}
  \begin{aligned}
    &\lim_{y\to0+} \int_0^T \int_0^y V(x)\big[u(t,x)-\alpha(t)\big]^2dxdt = 0,\\
    &\lim_{y\to0+} \int_0^T \int_{1-y}^1 V(x)\big[u(t,x)-\beta(t)\big]^2dxdt = 0.
  \end{aligned}
\end{equation}
Given \eqref{eq:ass-v}, the necessary boundary information is contained here.
More details can be found in \eqref{eq:bd-average} and Lemma \ref{lem:bd}.
\end{rem}

\begin{rem}\label{rem:mix-bd}
Indeed, the boundary conditions at $x=0$ and $x=1$ are treated separately.
Hence, if $V$ is integrable at $x=0$ (resp. $x=1$) but not at $x=1$ (resp. $x=0$), the entropy solution is defined by \eqref{eq:eb} and \eqref{eq:ei-bd} for all $\vf\in\cC_c^2(\bR\times(-\infty,1))$ (resp. $\cC_c^2(\bR\times(0,\infty))$) such that $\vf\ge0$.
\end{rem}

Similarly to the integrable case, we can define the entropy solution using the Kruzhkov entropy instead.

\begin{prop}\label{prop:kuz-non}
Assume \eqref{eq:ass-v} and \eqref{eq:ass-rho-l2}.
The entropy solution is equivalently defined as $u \in L^\infty(\Sigma)$ satisfying \textnormal{(EB)} and
\begin{equation}\label{eq:ei-kuz}
  \begin{aligned}
    \int_0^1 |u_0-k|\vf(0,\cdot)dx + \iint_\Sigma \big[|u-k|\partial_t\vf + \xi(u,k)\partial_x\vf\big]dxdt\\
    \ge \iint_\Sigma \sgn(u-k)V(x)(u-\varrho)\vf\,dxdt,
  \end{aligned}
\end{equation}
for all $k\in\bR$ and $\vf\in\cC_c^2(\bR\times(0,1))$ such that $\vf\ge0$.
Furthermore, the initial data is attained in the sense of \eqref{eq:ini}.
\end{prop}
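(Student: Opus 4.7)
\textbf{The plan} is to separate the equivalence of the two formulations from the initial trace.

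\emph{Equivalence.} For $\eqref{eq:ei}\Rightarrow\eqref{eq:ei-kuz}$, fix $k$ and approximate the Kruzhkov pair by the smooth Lax pairs $f_\ve(u) = \sqrt{(u-k)^2 + \ve^2}-\ve$, with flux $q_\ve$ determined by $q_\ve'(u) = f_\ve'(u)J'(u)$. On the essentially bounded range of $u$, $f_\ve$, $f_\ve'$ and $q_\ve$ converge uniformly and boundedly to $|u-k|$, $\sgn(u-k)$ and $\xi(u,k)$, so dominated convergence in \eqref{eq:ei} produces \eqref{eq:ei-kuz}. For the reverse, fix a Lax pair $(f,q)$ and $M>\|u\|_{L^\infty(\Sigma)}+\|u_0\|_{L^\infty((0,1))}$. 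For $|u|\le M$ one obtains, by direct integration using $\partial_u\xi(u,k)=\sgn(u-k)J'(u)$, the representations
\begin{equation}
f(u)=\tfrac12\int_{-M-1}^{M+1}|u-k|\,f''(k)\,dk+au+b,\qquad q(u)=\tfrac12\int_{-M-1}^{M+1}\xi(u,k)\,f''(k)\,dk+aJ(u)+c,
\end{equation}
for suitable constants $a,b,c$. Integrating \eqref{eq:ei-kuz} against $\tfrac12 f''(k)\,dk$ and applying Fubini handles the integral parts. The affine correction is settled by inserting $k=\pm K$ with $K>M$ into \eqref{eq:ei-kuz}: on the range of $u$, $|u-k|$ and $\xi(u,k)$ collapse to $\pm(u-k)$ and $\pm[J(u)-J(k)]$, and the identities $\iint_\Sigma\partial_x\vf\,dxdt=0$, $\int_0^1\vf(0,\cdot)dx+\iint_\Sigma\partial_t\vf\,dxdt=0$ (valid since $\vf\in\cC_c^2(\bR\times(0,1))$) make the $K$- and $J(K)$-terms vanish; the two resulting opposite inequalities combine into the distributional identity
\begin{equation}
\int_0^1 u_0\vf(0,\cdot)dx+\iint_\Sigma[u\partial_t\vf+J(u)\partial_x\vf]\,dxdt=\iint_\Sigma G\vf\,dxdt,
\end{equation}
which, added to the previous integral part, recovers \eqref{eq:ei}.

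\emph{Initial trace.} For $\phi\in\cC_c^\infty((0,1))$, $k\in\bR$, and a smooth non-increasing $\theta_h$ with $\theta_h(0)=1$ and $\supp\theta_h\subseteq[0,h]$, plugging $\vf(t,x)=\phi(x)\theta_h(t)$ into \eqref{eq:ei-kuz} (with $V\phi$ bounded since $\supp\phi\Subset(0,1)$) and rearranging yields
\begin{equation}
\int_0^\infty[-\theta_h'(t)]\int_0^1|u(t,x)-k|\phi(x)\,dxdt\le\int_0^1|u_0-k|\phi\,dx+o(1)
\end{equation}
as $h\to 0$. Combined with the distributional continuity of $t\mapsto\int_0^1 u(t,\cdot)\phi\,dx$ at $t=0$ (from the identity just established) and the convex lower semicontinuity of $u\mapsto|u-k|$, a standard Lebesgue-point argument gives $\esslim_{t\to 0+}\int_0^1\phi|u(t,\cdot)-u_0|\,dx=0$ for every $\phi\in\cC_c^\infty((0,1))$. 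Finally, $L^\infty$-boundedness of $u$ and $u_0$ shows that a thin strip $(0,\delta)\cup(1-\delta,1)$ contributes at most $2\delta(\|u\|_\infty+\|u_0\|_\infty)$ uniformly in $t$, so choosing $\phi\uparrow 1$ on $[\delta,1-\delta]$ and letting $\delta\to 0$ produces \eqref{eq:ini}.

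\textbf{Main obstacle.} The delicate step is the last one: admissible test functions in \eqref{eq:ei-kuz} vanish at $\{x=0,1\}$, so the entropy inequality alone produces no control over the boundary contributions to $\int_0^1|u(t,\cdot)-u_0|\,dx$ as $t\to 0+$. The passage from local (in $(0,1)$) to global $L^1((0,1))$ convergence is driven purely by the $L^\infty$-bound, in contrast with the integrable case \eqref{eq:ei-bd-kuz} where boundary terms appear explicitly.
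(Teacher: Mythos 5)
The proposal is correct and follows essentially the same route as the paper: smooth approximation of the Kruzhkov pair plus the convex-combination representation of a general Lax entropy (with the affine part recovered by taking $k$ outside the range of $u$) for the equivalence, and product test functions concentrating at $t=0$ followed by approximation of $u_0$ and an $L^\infty$-based extension to $\psi\equiv1$ for the initial trace. The only noteworthy difference is cosmetic: where you invoke averaging kernels $-\theta_h'$ together with ``a standard Lebesgue-point argument'', the paper obtains the existence of $\esslim_{t\to0+}\int_0^1|u(t,\cdot)-k|\psi\,dx$ directly by showing that $t\mapsto\int_0^1|u(t,\cdot)-k|\psi\,dx-Ct$ is non-increasing, which packages the same information a little more cleanly and avoids the (fixable) issue that control of averages over $(0,h)$ alone does not immediately bound the essential limsup.
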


We are now ready to state our main results.

\begin{thm}[Uniqueness]\label{thm:uniq}
Assume \eqref{eq:ass-rho-l2}.
Instead of \eqref{eq:ass-v}, assume that $V$ satisfies a stronger condition at the boundaries:
\begin{align}\label{eq:ass-v-uniq}
  \limsup_{y\to0+}\,\frac1{y^2}\int_0^y \left[ \frac1{V(x)} + \frac1{V(1-x)}\right] dx < \infty.
\end{align}
Then, there is at most one $u \in L^\infty(\Sigma)$ that satisfies Definition \ref{defn:non}.
\end{thm}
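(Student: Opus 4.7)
\medskip
\noindent\emph{Proof plan.}
The plan is to adapt Kruzhkov's doubling of variables, with the boundary information at $\partial\Omega$ extracted from the energy bound rather than from any trace. Let $u,v\in L^\infty(\Sigma)$ both satisfy Definition \ref{defn:non}. Using Proposition \ref{prop:kuz-non}, I would apply \eqref{eq:ei-kuz} to $u$ with $k=v(s,y)$ and to $v$ with $k=u(t,x)$, test against $\rho_\ep(t-s)\rho_\ep(x-y)\psi(t,x)$ for a non-negative $\psi\in\cC_c^2(\bR\times(0,1))$, add, integrate, and pass to the limit $\ep\to 0$. The $|u_0-k|$ initial terms cancel by the shared datum, and the two relaxation contributions combine through the monotonicity $\partial_u G=V\ge 0$ into $V(x)|u-v|$, yielding the Kato-type inequality
\[
  \iint_\Sigma \bigl[|u-v|\partial_t\psi + \xi(u,v)\partial_x\psi - V(x)|u-v|\psi\bigr]dxdt \ge 0
\]
for every non-negative $\psi\in\cC_c^2(\bR\times(0,1))$.

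I would then specialize to $\psi(t,x)=\phi(t)\chi_\ep(x)$ with $\phi\in\cC_c^2(\bR;[0,\infty))$ and a smooth cutoff $\chi_\ep$ equal to $1$ on $[\ep,1-\ep]$ and satisfying $|\chi_\ep'|\le C/\ep$ on $[0,\ep]\cup[1-\ep,1]$. Of the three resulting integrals only the spurious boundary flux $\iint \xi(u,v)\phi\chi_\ep'\,dxdt$ is delicate; the other two pass to the $\ep\to 0$ limit by monotone or dominated convergence. Using $|\xi(u,v)|\le M|u-v|$ together with Cauchy--Schwarz on the left layer gives
\[
  \left|\int_0^T\!\int_0^\ep \xi(u,v)\phi\chi_\ep'\,dxdt\right|
  \le \frac{CM\|\phi\|_\infty}{\ep}\left(T\int_0^\ep\frac{dx}{V(x)}\right)^{1/2}\left(\int_0^T\!\int_0^\ep V(x)|u-v|^2dxdt\right)^{1/2}.
\]
The strengthened hypothesis \eqref{eq:ass-v-uniq} renders $\ep^{-2}\int_0^\ep V^{-1}dx$ uniformly bounded in $\ep$, while the weighted $L^2$ factor vanishes as $\ep\to 0^+$ by \eqref{eq:bd-l2} applied to $u$ and $v$ separately. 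The symmetric estimate at $x=1$ eliminates the other boundary flux.

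Sending $\ep\to 0$ in the Kato inequality then yields, for $h(t):=\int_0^1|u-v|dx$ and $\tilde h(t):=\int_0^1 V(x)|u-v|dx$,
\[
  \int_0^\infty \phi'(t)h(t)dt \ge \int_0^\infty \phi(t)\tilde h(t)dt \ge 0
\]
for every non-negative $\phi\in\cC_c^2(\bR)$. Choosing $\phi_\delta$ equal to $1$ on $[-1,T]$ and linearly decreasing on $[T,T+\delta]$, then letting $\delta\to 0$ at a Lebesgue point, gives $-h(T)\ge\int_0^T\tilde h\ge 0$ for a.e. $T>0$, whence $h\equiv 0$ and $u=v$ almost everywhere. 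The main obstacle in this plan is precisely the spurious-flux estimate: since no trace of the solutions is available, the boundary information must be reconstructed from the energy estimate, and the quantitative calibration in \eqref{eq:ass-v-uniq} is exactly what is needed so that the $1/\ep$ blow-up of $\chi_\ep'$ can be absorbed by the reciprocal control on $V^{-1}$, after which the vanishing weighted $L^2$ boundary layer of $u-v$ closes the estimate.
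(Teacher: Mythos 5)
Your proposal is correct and follows essentially the same route as the paper: Kruzhkov doubling of variables to obtain a Kato-type inequality for test functions compactly supported in $(0,1)$, followed by a boundary-layer cutoff whose spurious flux term is killed by combining the weighted $L^2$ decay \eqref{eq:bd-l2} (a consequence of (EB) and \eqref{eq:ass-rho-l2}) with the reciprocal condition \eqref{eq:ass-v-uniq} via Cauchy--Schwarz — exactly the mechanism of the paper's Lemma \ref{lem:bd} — and then a Gr\"onwall-type conclusion using the $L^1$-continuity at $t=0$. The only cosmetic differences are that you retain the nonnegative term $V|u-v|\psi$ that the paper discards by sign, and you organize the layer estimate with Cauchy--Schwarz rather than Young's inequality with a free parameter.
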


Observe that for any $\delta>0$ and $y\in(0,1)$,
\begin{equation}
  \begin{aligned}
    &\int_0^T \frac1y\int_0^y |u(t,x)-\alpha(t)|\,dxdt\\
    \le\,&\frac1{4\delta}\int_0^T \int_0^y V(x)(u-\alpha)^2dxdt + \frac{T\delta}{y^2}\int_0^y \frac1{V(x)}dx.
  \end{aligned}
\end{equation}
Taking $y\to0$ and choosing $\delta$ arbitrarily small, \eqref{eq:ass-v-uniq} suggests that the boundary conditions in \eqref{eq:bl} hold in the sense of space-time average:
\begin{align}\label{eq:bd-average}
  \lim_{y\to0} \int_0^T \frac1y\int_0^y |u(t,x)-\alpha(t)|\,dxdt = 0,
\end{align}
and similarly for $\beta(t)$.
The convergence in \eqref{eq:bd-average} can be significantly improved under extra conditions.

\begin{thm}\label{thm:bd}
Let $\alpha(t)\equiv\alpha$, $\beta(t)\equiv\beta$ be almost everywhere constants and $u$ satisfy \textnormal{(EB)} and \textnormal{(EI)}.
Assume \eqref{eq:ass-v-uniq} and for all $T>0$ that
\begin{equation}\label{eq:ass-rho-l1}
  \begin{aligned}
    &\lim_{y\to0+} \int_0^T \int_0^y V(x)|\varrho(t,x)-\alpha|\,dxdt = 0,\\
    &\lim_{y\to0+} \int_0^T \int_{1-y}^1 V(x)|\varrho(t,x)-\beta|\,dxdt = 0.
  \end{aligned}
\end{equation}
Then, for all $0 \le s < t$ and Lax entropy--flux pairs $(f,q)$,
\begin{equation}\label{eq:bd}
  \begin{aligned}
    \esslim_{x\to0+} \int_s^t q(u(r,x))dr = (t-s)q(\alpha),\\
    \esslim_{x\to1-} \int_s^t q(u(r,x))dr = (t-s)q(\beta).
  \end{aligned}
\end{equation}
\end{thm}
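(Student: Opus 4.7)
My plan is to handle the $x \to 0^+$ limit; the case $x \to 1^-$ is symmetric. Throughout I write $F_q(x) := \int_s^t q(u(r,x))\,dr$, defined for a.e.\ $x$. The strategy has three stages: an integrated form of the entropy inequality, a dedicated argument for the linear flux $q = J$, and a centering reduction at $\alpha$ for general Lax pairs.

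First, inserting into \eqref{eq:ei} and \eqref{eq:ei-kuz} test functions $\chi_\epsilon(r)\psi_\eta(z)$ that approximate $\mathbf{1}_{[s,t]}(r)\mathbf{1}_{(a,b)}(z)$ (smoothly and compactly supported in $\bR\times(0,1)$) and passing to the limit in the smoothing parameters, I obtain for a.e.\ $0 < a < b < 1$ and every Lax pair $(f,q)$
\[
  F_q(a) - F_q(b) \;\ge\; \int_s^t\!\!\int_a^b f'(u) V(u - \varrho)\,dz\,dr + \int_a^b \bigl[f(u(t,z)) - f(u(s,z))\bigr]dz,
\]
with equality when $f(u) = u$ (by the same procedure applied to $\pm u$). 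The analogous computation starting from \eqref{eq:ei-kuz} at $k = \alpha$, combined with $\sgn(u - \alpha) V(u - \varrho) \ge V|u - \alpha| - V|\varrho - \alpha|$, produces the crucial $L^1$-type estimate: because $\xi(\cdot,\alpha) \in L^\infty$ (from $u \in L^\infty$) and \eqref{eq:ass-rho-l1} handles the $V|\varrho - \alpha|$-term, sending $a \to 0^+$ gives $\int_0^b \int_s^t V |u - \alpha|\,dr\,dz \le C_b < \infty$ for every $b \in (0,1)$.

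Second, for $q = J$ the equality case above reads $F_J(a) - F_J(b) = \int_a^b H\,dz$ with $H(z) := \int_s^t V(u - \varrho)(r,z)\,dr + u(t,z) - u(s,z)$, and the first stage guarantees $H \in L^1((0, b_0))$ for small $b_0$ (the three pieces being controlled respectively by the Kruzhkov bound, \eqref{eq:ass-rho-l1}, and $u \in L^\infty$). Absolute continuity of the Lebesgue integral then produces an essential limit $L_J := \esslim_{a \to 0^+} F_J(a)$, which is pinned down by comparison with the $L^1$-average: the Cauchy--Schwarz bound $\int_0^y |u - \alpha|\,dx \le (\int_0^y 1/V\,dx)^{1/2}(\int_0^y V(u-\alpha)^2\,dx)^{1/2}$, combined with \eqref{eq:ass-v-uniq}, \textnormal{(EB)} and \eqref{eq:bd-l2} (which together yield \eqref{eq:bd-average}), delivers $\tfrac{1}{y}\int_0^y F_J\,dx \to (t-s)J(\alpha)$, so $L_J = (t-s)J(\alpha)$.

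Third, for a general Lax pair $(f,q)$ I employ the centered data $\tilde f(u) := f(u) - f(\alpha) - f'(\alpha)(u - \alpha)$ and $\tilde q(u) := q(u) - q(\alpha) - f'(\alpha)(J(u) - J(\alpha))$. Since $\tilde f'(\alpha) = 0$, Taylor gives $|\tilde f'(u)| \le C|u - \alpha|$; applying the first-stage inequality to $(\tilde f, \tilde q)$ and Cauchy--Schwarz yields $\big|\int_s^t\!\int_a^b \tilde f'(u) V(u - \varrho)\,dz\,dr\big| \le C\bigl(\int V(u - \alpha)^2\bigr)^{1/2}\bigl(\int V(u - \varrho)^2\bigr)^{1/2} \to 0$ as $a,b \to 0^+$ by \textnormal{(EB)}, \eqref{eq:bd-l2} and \eqref{eq:ass-rho-l2}. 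Hence $F_{\tilde q}(a) - F_{\tilde q}(b) \ge -\epsilon(a,b)$ with $\epsilon \to 0$; this one-sided near-monotonicity forces $\esslim F_{\tilde q}$ to exist (standard $\essinf$-versus-$\esssup$ comparison on shrinking neighborhoods) and the $L^1$-average identifies the value as $0$ via $|\tilde q(u)| \le C|u-\alpha|$. Unwinding the identity $F_q = F_{\tilde q} + f'(\alpha)F_J + (t-s)[q(\alpha) - f'(\alpha)J(\alpha)]$ and inserting $\esslim F_J = (t-s)J(\alpha)$ from the second stage gives $\esslim F_q(x) = (t-s)q(\alpha)$. The delicate point is the second stage: a direct Cauchy--Schwarz on $\int V(u - \varrho)\,dz$ fails because $\int_0^{b_0} V\,dx = \infty$ in the non-integrable regime, so the Kruzhkov entropy at $k = \alpha$ must be invoked to extract $L^1$ control of $V|u - \alpha|$, with the uniformly bounded Kruzhkov flux $\xi$ absorbing the left-endpoint trace contribution.
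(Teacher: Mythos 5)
Your proposal is correct, and while its overall architecture ($L^1$-refinement of the energy bound, then a monotonicity argument for existence of the essential limit, then identification of its value) parallels the paper's, the execution differs at each of the three stages. For the $L^1$ bound $V|u-\alpha|\in L^1$ near $x=0$, the paper first proves a lemma extending the entropy inequality to test functions supported up to the boundary for boundary entropies centered at $\alpha$ (using the cutoffs $\psi_\ve$ and the \CS argument of Lemma \ref{lem:bd}), and then applies it with the smoothed Kruzhkov entropies $F_\ve(\cdot,\alpha)$; you instead keep the left endpoint $a>0$ strictly inside, and absorb the trace term through the uniform bound $|F_\xi(a)|\le (t-s)\|\xi(u,\alpha)\|_{L^\infty}$ --- a lighter route to the same Proposition \ref{prop:l1}. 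For existence of the limit, the paper treats all Lax fluxes at once by showing $F_{q,\phi}(x)=\int q(u)\phi\,dt-C\int_0^x(V|u-\varrho|+1)$ is non-increasing, which needs the $L^1$ bound for every $q$; you split $q=\tilde q+f'(\alpha)J+\mathrm{const}$, use the exact conservation identity for the linear pair (where the $L^1$ bound enters through absolute continuity of $\int_a^b H$), and handle the centered pair $(\tilde f,\tilde q)$ --- which is precisely a boundary entropy at $k=\alpha$ --- by pure $L^2$ energy estimates and near-monotonicity, so the $L^1$ refinement is only ever invoked for the single flux $J$. Finally, for identification, the paper exploits non-integrability of $V$ together with $V|u-\alpha|\in L^1$ to produce a sequence $x_n\to0$ along which $\int_s^t|u(r,x_n)-\alpha|\,dr\to0$, whereas you match the essential limit against the space--time average \eqref{eq:bd-average}; both mechanisms are valid given \eqref{eq:ass-v-uniq}, and your Lipschitz bounds $|\tilde q(u)|\le C|u-\alpha|$, $|J(u)-J(\alpha)|\le M|u-\alpha|$ make the averaging step go through. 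I see no gap; the only points worth making explicit in a write-up are that $(\tilde f,\tilde q)$ is indeed a Lax pair (so the stage-one inequality applies to it), that \eqref{eq:ass-rho-l1} together with boundedness of $\varrho$ implies \eqref{eq:ass-rho-l2} (so that \eqref{eq:bd-l2} and hence \eqref{eq:bd-average} are available), and the routine reduction of the case $s=0$ to $s>0$.
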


\begin{cor}\label{cor:bd}
Assume the same conditions as in Theorem \ref{thm:bd} and that $J$ is convex (or concave), then for all $0 \le s < t$,
\begin{equation}\label{eq:bd-1}
  \begin{aligned}
    \esslim_{x\to0+} \int_s^t u(r,x)dr = (t-s)\alpha,\\
    \esslim_{x\to1-} \int_s^t u(r,x)dr = (t-s)\beta.
  \end{aligned}
\end{equation}
\end{cor}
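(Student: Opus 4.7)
The plan is to invoke Theorem~\ref{thm:bd} with two Lax entropy--flux pairs and combine the two limits using the convexity of $J$. First, take $f(u)=u$, which is a Lax entropy since $f''=0\ge 0$, with flux $q=J$; Theorem~\ref{thm:bd} yields
\[
\esslim_{x\to 0+}\int_s^t J(u(r,x))\,dr = (t-s)J(\alpha).
\]

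Next, take the convex entropy $f(u)=(u-\alpha)^2$, which is a Lax entropy since $f''=2\ge 0$; the associated flux, normalized so that $q(\alpha)=0$, is
\[
q(u) = 2\int_\alpha^u (s-\alpha)\,J'(s)\,ds = (u-\alpha)^2\cdot 2\int_0^1 t\,J'\bigl(\alpha+t(u-\alpha)\bigr)\,dt,
\]
and Theorem~\ref{thm:bd} gives $\esslim_{x\to 0+}\int_s^t q(u(r,x))\,dr = 0$.

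The core of the argument is to exploit the convexity of $J$ together with the $L^\infty$ bound $\|u\|_\infty\le M$ to establish a pointwise lower bound
\[
q(u) \ge c_0\,(u-\alpha)^2
\]
on the essential range of the solution, with $c_0>0$ depending only on $J$ and $M$. Granting this,
\[
c_0\int_s^t \bigl(u(r,x)-\alpha\bigr)^2\,dr \le \int_s^t q(u(r,x))\,dr \longrightarrow 0,
\]
and the Cauchy--Schwarz inequality yields
\[
\Bigl|\int_s^t u(r,x)\,dr - (t-s)\alpha\Bigr| \le \sqrt{(t-s)\int_s^t (u-\alpha)^2\,dr}\longrightarrow 0,
\]
which is the first limit of the corollary. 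The right boundary $x\to 1-$ is treated symmetrically with $\beta$ in place of $\alpha$, and the concave case reduces via $J\mapsto -J$ (which leaves the Lax-entropy framework invariant).

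The main technical difficulty is the lower bound $q(u)\ge c_0(u-\alpha)^2$. By convexity, $J'$ is nondecreasing, so the weighted average $2\int_0^1 t\,J'(\alpha+t(u-\alpha))\,dt$ admits a positive uniform lower bound whenever $J'$ does not change sign on the range of $u$ (for instance, when $J$ is strictly monotone on $[-M,M]$). When $J$ has an interior critical point lying between $\alpha$ and some value reached by the solution, this average may degenerate; the remedy is to complement the argument by pairing the first limit $\int J(u)\,dr\to(t-s)J(\alpha)$ with a Kruzhkov-type one-sided Lax entropy $f(u)=(u-\alpha)_\pm$ (as a limit of smooth convex $\cC^2$ approximations), thereby controlling the excursions of $u$ above and below $\alpha$ separately.
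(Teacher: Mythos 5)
There is a genuine gap, and it sits exactly where the corollary is nontrivial. Your argument for monotone $J$ is fine (and in that regime it is a clean, more elementary route than the paper's: the quadratic boundary entropy plus Cauchy--Schwarz replaces any compactness argument). But the claimed pointwise bound $q(u)\ge c_0(u-\alpha)^2$ for $q(u)=2\int_\alpha^u(s-\alpha)J'(s)\,ds$ is simply false once $J'$ changes sign on the essential range of $u$: e.g.\ for $J(u)=u^2$ and $\alpha=0$ one gets $q(u)=\tfrac43 u^3<0$ for $u<0$; for the paper's motivating flux $J(u)=u(1-u)$ on $[0,1]$ the same failure occurs for generic $\alpha$. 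Since $q$ then takes both signs, $\int_s^t q(u)\,dr\to0$ gives no control on $\int_s^t(u-\alpha)^2dr$, and the whole Cauchy--Schwarz step collapses precisely in the case the corollary is meant to cover.

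Your proposed remedy does not close this. The obstruction is the conjugate value $\alpha_*\ne\alpha$ with $J(\alpha_*)=J(\alpha)$: a boundary trace concentrated at $\alpha_*$ satisfies $\int_s^t J(u)\,dr\to(t-s)J(\alpha)$ (so pairing with the linear entropy is powerless), and the one-sided fluxes $q_\pm$ based at $\alpha$ also vanish, or fail to be signed, there. This is where the paper does real work: it passes to a weak-$\star$ (Young measure) limit $\{\mu_r\}$ of $u(\cdot,x_n)$, uses the family $q_{\pm,\delta}(u)=\pm\mathbf1_{\pm(u-\delta)\ge0}(J(u)-J(\delta))$ for \emph{all rational} $\delta$ to localize $\mu_r$ first onto $[\alpha,\alpha_*]$ and then onto the two-point set $\{\alpha,\alpha_*\}$, and finally invokes one additional Lax flux with $q(\alpha_*)>q(\alpha)$ strictly to kill the mass at $\alpha_*$. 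Interestingly, your quadratic flux is a natural candidate for that last separating flux, since $q(\alpha_*)>0=q(\alpha)$ for strictly convex $J$; but it must be used to distinguish the two atoms after the two-step localization, not as a global coercive lower bound. As written, your proof establishes the corollary only when $J$ is strictly monotone on $[-\|u\|_\infty,\|u\|_\infty]$.
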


Finally, the existence of the entropy solution for non-integrable source with smooth coefficients and boundary data is established below.

\begin{thm}[Existence]\label{thm:exis}
Assume that $J\in\cC^2(\bR)$, $V\in\cC^2((0,1))$, $\alpha$, $\beta\in\cC_b^2(\bR_+)$ satisfy \eqref{eq:ass-v} and \eqref{eq:ass-rho-l2}.
Moreover, suppose that for each $T>0$, there is a family of functions $\{\varrho^\ve;\ve>0\}$ such that the following conditions are fulfilled.
\begin{itemize}
\item[\textnormal{(\romannum1)}] For each $\ve>0$, $\varrho^\ve\in\cC^2(\Sigma_T)$ and $\varrho^\ve\to\varrho$ in $L^2(\Sigma_T)$.
\item[\textnormal{(\romannum2)}] $\|\varrho^\ve\|_{L^\infty(\Sigma_T)} \le \|\varrho\|_{L^\infty(\Sigma_T)}$, $\sup_{\ve>0} \|\varrho^\ve\|_{H^1(\Sigma_T)} < \infty$ and
\begin{align}
  \sup_{\ve>0} \iint_{\Sigma_T} V(x)\big[\varrho^\ve(t,x)-\varrho(t,x)\big]^2dxdt < \infty.
\end{align}
\end{itemize}
Then, \eqref{eq:bl} admits an entropy solution in Definition \ref{defn:non}.
\end{thm}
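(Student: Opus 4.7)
The plan is a vanishing-viscosity construction in which the singular coefficient $V$ is simultaneously truncated. Fix $T>0$. For each $\ve\in(0,1)$, set $V^\ve(x):=V(x)\wedge\ve^{-1}$, pick smooth initial data $u_0^\ve\in\cC^2([0,1])$ compatible with $\alpha(0),\beta(0)$ at the endpoints and converging to $u_0$ in $L^1((0,1))$ with $\|u_0^\ve\|_\infty\le\|u_0\|_\infty$, and solve the uniformly parabolic Dirichlet problem
\begin{equation*}
  \partial_tu^\ve + \partial_x J(u^\ve) + V^\ve(x)\bigl(u^\ve-\varrho^\ve\bigr) = \ve\,\partial_x^2u^\ve
\end{equation*}
on $\Sigma_T=(0,T)\times(0,1)$ with $u^\ve(t,0)=\alpha(t)$, $u^\ve(t,1)=\beta(t)$, $u^\ve(0,\cdot)=u_0^\ve$. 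All coefficients are $\cC^2$ and the principal part is uniformly elliptic, so classical parabolic theory produces a classical solution $u^\ve\in\cC^2(\overline{\Sigma_T})$.

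The next step is to collect three uniform-in-$\ve$ estimates. A maximum-principle argument using $\partial_uG^\ve\ge 0$ and the $L^\infty$-bound on $\varrho^\ve$ from (ii) yields $\|u^\ve\|_{L^\infty(\Sigma_T)}\le C$ in terms of $\|u_0\|_\infty$, $\|\alpha\|_\infty$, $\|\beta\|_\infty$ and $\|\varrho\|_\infty$. Multiplying the equation by $u^\ve-\varrho^\ve$, integrating over $\Sigma_T$, and controlling the flux boundary contributions via $\alpha,\beta\in\cC_b^2(\bR_+)$ and the $H^1$-trace and $L^\infty$ bounds on $\varrho^\ve$ from (i)--(ii) produces the energy bound
\begin{equation*}
  \iint_{\Sigma_T} V^\ve(x)\bigl[u^\ve-\varrho^\ve\bigr]^2\,dxdt \le C,
\end{equation*}
uniformly in $\ve$. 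Finally, for any Lax pair $(f,q)$, multiplying by $f'(u^\ve)$ and exploiting $f''\ge 0$ on the viscosity term gives the pre-limit entropy inequality
\begin{equation*}
  \partial_tf(u^\ve) + \partial_x q(u^\ve) + f'(u^\ve)V^\ve(x)\bigl(u^\ve-\varrho^\ve\bigr) \le \ve\,\partial_x^2 f(u^\ve).
\end{equation*}

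Passing to the limit $\ve\to 0$ is the core analytic step. Along a subsequence, $u^\ve\rightharpoonup u$ weak-$*$ in $L^\infty(\Sigma_T)$ and generates a Young measure $\nu_{t,x}$. The viscosity term $\ve\partial_x^2 f(u^\ve)$ vanishes in $H^{-1}_{loc}(\Sigma_T)$, while the source term is bounded in $L^2_{loc}(\bR\times(0,1))$ uniformly in $\ve$ (on any compact $K\Subset(0,1)$ the truncation $V^\ve$ coincides with the bounded restriction $V|_K$ for small $\ve$, and the energy estimate supplies the $L^2$ control). The Murat--Tartar div--curl lemma together with the framework for scalar laws in \cite{MNRR96} and \cite{Daferm16} then forces $\nu_{t,x}$ to be a Dirac mass at $u(t,x)$, yielding strong $L^1_{loc}(\Sigma_T)$ convergence of $u^\ve$ to $u$. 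As an alternative route, one may directly appeal to the uniqueness of measure-valued entropy solutions via Theorem \ref{thm:uniq}.

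Finally, I would verify that $u$ satisfies Definition \ref{defn:non}. The energy bound (EB) follows by Fatou applied to $V^\ve|u^\ve-\varrho^\ve|^2$, using $V^\ve\nearrow V$ pointwise and the $L^2(\Sigma_T)$ convergence $\varrho^\ve\to\varrho$ from (i). For (ET), test the pre-limit inequality against arbitrary $\vf\in\cC_c^2(\bR\times(0,1))$, $\vf\ge 0$; the strong $L^1_{loc}$ convergence of $u^\ve$ on $\supp\vf$, where $V$ is bounded, passes every nonlinear term to the limit, while the initial trace survives because $u_0^\ve\to u_0$ in $L^1$. The main obstacle is the compactness step under the bare hypothesis (h1), which imposes no genuine-nonlinearity on $J$: the argument must either exploit the full family of Lax entropies through compensated compactness, or bypass this by measure-valued uniqueness based on Theorem \ref{thm:uniq}. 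The energy estimate is indispensable because it is the only tool controlling $u^\ve$ near the singular boundary, which is why the test functions in (ET) are confined to $\bR\times(0,1)$.
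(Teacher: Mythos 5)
Your proposal follows essentially the same route as the paper: a vanishing-viscosity approximation with smoothed data $\varrho^\ve$ and Dirichlet boundary values $\alpha,\beta$, a maximum principle for the uniform $L^\infty$ bound, the energy estimate obtained by testing the equation with $u^\ve-\varrho^\ve$, and compensated compactness (div--curl plus Tartar's factorization over the full family of Lax pairs) to reduce the Young measure to a Dirac mass, with Fatou-type lower semicontinuity yielding (EB) and strong $L^1_{loc}$ convergence yielding (ET). The only deviation is your truncation $V^\ve=V\wedge\ve^{-1}$, which the paper avoids by keeping the full singular $V$ in the parabolic problem; this is a harmless technical variant (modulo smoothing the truncation so the coefficients stay $\cC^2$) and does not change the substance of the argument.
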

\begin{eg}
Recall \eqref{eq:hl} with boundary conditions \eqref{eq:hl-bd}.
When $\gamma\in(0,1)$, the source is integrable.
When $\gamma\ge1$, the source is non-integrable and the conditions in Theorem \ref{thm:uniq} and \ref{thm:exis} are satisfied.
Hence, the particle density evolves macroscopically with the unique entropy solution.
\end{eg}

\begin{rem}
The method presented for integrable $V$ can be extended to scalar balance laws in spatial dimensions $d\ge2$, see, e.g., \cite{Otto96,Martin07}.
For the multi-dimensional non-integrable case, one can construct an entropy solution satisfying an energy estimate similar to \eqref{eq:eb} via the standard vanishing viscosity limit.
However, the corresponding uniqueness remains open.
\end{rem}

\subsection{Organization of the paper}

The arguments for the integrable case are largely the same as those used in \cite[Section 2.7, 2.8]{MNRR96}, see also \cite{ColomR15}.
Hence, we only summarize the ideas briefly.
The focus is the non-integrable case.
In Section \ref{sec:uniq}, we prove Theorem \ref{thm:uniq} exploiting Kruzhkov's doubling of variables technique.
In Section \ref{sec:bd}, we prove Theorem \ref{thm:bd} via an $L^1$-refinement of the energy bound.
In Section \ref{sec:exis}, we prove Theorem \ref{thm:exis} with vanishing viscosity method.
Proposition \ref{prop:kuz-non} and some preliminary results are proved in Section \ref{sec:pre}.

\subsection{Notations}
For a measure space $(X;\mu)$ and $p\ge1$, let
\begin{align}
  L^p(X;\mu) = \big\{f;\|f\|_{L^p(X;\mu)}<\infty\big\}, \quad \|f\|_{L^p(X;\mu)}^p = \int_X |f|^p\,d\mu.
\end{align}
For $p=\infty$, $L^\infty(X;\mu)$ stands for the space of essentially bounded measurable functions and $\|\cdot\|_{L^\infty(X;\mu)}$ is the essential supremum norm.
When $X\subseteq\bR^d$ and $\mu$ is the Lebesgue measure, we use the abbreviations $L^p(X)$ and $\|\cdot\|_{L^p(X)}$.

Recall that $\Sigma=\bR_+\times(0,1)$ and denote by $\nu$ the $\sigma$-finite measure on $\Sigma$ given by $\nu(dxdt)=V(x)dxdt$.
For $T>0$, let $\Sigma_T=(0,T)\times(0,1)$.
With some abuse of notations, the restriction of $\nu$ on $\Sigma_T$ is still denoted by $\nu$.

Let $(f,q)$ be either a Lax entropy--flux pair or the Kruzhkov entropy--flux pair $(\eta,\xi)(\cdot,k)$.
For $\vf\in\cC_c^2(\bR^2)$, the entropy product of $(f,q)$ is defined as
\begin{equation}\label{eq:ent-prod}
  \begin{aligned}
    E_\vf^{(f,q)}(u) := \iint_\Sigma \big[f(u)\partial_t\vf+q(u)\partial_x\vf\big]dxdt\\
    - \iint_\Sigma f'(u)V(x)(u-\varrho)\vf\,dxdt.
  \end{aligned}
\end{equation}
We identify $\partial_u\eta(u,k) = \sgn(u-k)$ for $\eta=|u-k|$.
Notice that the last integral in \eqref{eq:ent-prod} is well-defined if and only if $f'(u)(u-\varrho)\vf \in L^1(\Sigma;\nu)$.

\section{Preliminary results}
\label{sec:pre}

First, we verify the alternative definitions of the entropy solution with \eqref{eq:ei-bd-kuz} and \eqref{eq:ei-kuz}.
The integrability of $V$ is irrelevant here.

\begin{lem}
For $u \in L^\infty(\Sigma)$, \eqref{eq:ei-bd} holds for all Lax entropy--flux pairs if and only it holds for the Kruzhkov entropy--flux pair and all $k\in\bR$.
\end{lem}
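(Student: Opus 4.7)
My plan is to prove the two implications separately. The direction $\eqref{eq:ei-bd}\Rightarrow\eqref{eq:ei-bd-kuz}$ rests on smooth approximation of the Kruzhkov pair: introduce
\[
F_\delta(u,k):=\sqrt{(u-k)^2+\delta^2}-\delta,\qquad Q_\delta(u,k):=\int_k^u\partial_vF_\delta(v,k)J'(v)\,dv,\qquad\delta>0,
\]
and check directly that $(F_\delta,Q_\delta)$ is a boundary entropy--flux pair in the sense of Definition~\ref{defn:bd-ent}: $F_\delta$ is $\cC^2$ and strictly convex in $u$ with $F_\delta(k,k)=\partial_uF_\delta(u,k)|_{u=k}=Q_\delta(k,k)=0$. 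Substituting $(F_\delta,Q_\delta)$ into \eqref{eq:ei-bd} and sending $\delta\to0^+$ yields \eqref{eq:ei-bd-kuz} by dominated convergence, using the pointwise limits $F_\delta\to|u-k|$, $\partial_uF_\delta\to\sgn(u-k)$, $Q_\delta\to\xi(u,k)$ and the $\delta$-independent bounds $|F_\delta|\le|u-k|$, $|\partial_uF_\delta|\le 1$, $|Q_\delta|\le M|u-k|$ on the $L^\infty$-range of $u,u_0,\alpha,\beta$.

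For the converse direction, Taylor's theorem applied to $F(\cdot,k)$ with the conditions $F(k,k)=\partial_uF(u,k)|_{u=k}=0$ yields the non-negative superposition
\begin{align*}
F(u,k)=\int_k^\infty(u-v)^+\,\partial_v^2F(v,k)\,dv+\int_{-\infty}^k(v-u)^+\,\partial_v^2F(v,k)\,dv,
\end{align*}
together with analogous formulas representing $Q(u,k)$ through $(J(u)-J(v))\mathbf{1}_{u>v}$ and $(J(v)-J(u))\mathbf{1}_{u<v}$, and $\partial_uF(u,k)$ through $\mathbf{1}_{u>v}$ and $\mathbf{1}_{u<v}$. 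Granted a semi-Kruzhkov refinement of \eqref{eq:ei-bd-kuz}---the same inequality with $|u-v|$ replaced by $(u-v)^+$ or $(v-u)^+$ and source/boundary terms adjusted accordingly---one multiplies by the non-negative measure $\partial_v^2F(v,k)\mathbf{1}_{v\ge k}\,dv$ or $\partial_v^2F(v,k)\mathbf{1}_{v\le k}\,dv$ respectively, integrates in $v$ via Fubini (justified by the $L^\infty$ bounds and local integrability of the weight), and sums the two integrated halves. The resulting inequality is exactly \eqref{eq:ei-bd}; the $M$-factored boundary contributions match because they combine additively to $F(\alpha,k)$ and $F(\beta,k)$ via $F=F_++F_-$.

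The main obstacle is establishing the semi-Kruzhkov inequality at each parameter $v$ from the full Kruzhkov inequality \eqref{eq:ei-bd-kuz}, since the symmetric decomposition $|u-v|=(u-v)^++(v-u)^+$ does not split the inequality algebraically---the sign factor $\sgn(u-v)$ on the source term and the absolute value on the boundary term prevent a purely linear splitting, as can be checked by a direct bookkeeping of the constants produced when the full Kruzhkov inequality is integrated against $\tfrac12\partial_v^2F(v,k)dv$. The classical resolution, cf.\ \cite[Section~2.7]{MNRR96}, combines the Kruzhkov inequality with the weak formulation of the conservation law---extractable from \eqref{eq:ei-bd-kuz} through appropriate choices of test function and limits in the parameter $k$---to recover the one-sided forms, after which the superposition argument above runs unhindered.
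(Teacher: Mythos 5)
Your proposal is correct and follows essentially the same route as the paper: the forward implication by substituting a smooth convex regularization of the Kruzhkov pair into \eqref{eq:ei-bd} and passing to the limit (your $\sqrt{(u-k)^2+\delta^2}-\delta$ versus the paper's compactly modified $F_\ve$ is an immaterial difference), and the converse by representing a general entropy as a nonnegative superposition of (semi-)Kruzhkov entropies plus an affine part, the latter recovered from \eqref{eq:ei-bd-kuz} by sending $k$ below $\essinf$ / above $\esssup$. Your explicit flagging of the bookkeeping needed to pass from the symmetric to the one-sided Kruzhkov inequalities is a point the paper glosses over with the phrase ``convex hull of $(\eta,\xi)(\cdot,k)$ and $(u,J)$,'' but the underlying argument is the same.
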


\begin{proof}
Choose $g\in\cC^2(\bR)$ such that $g(0)=g'(0)=0$, $g(2)=g'(2)=1$, $g''(u)\ge0$ and $g(u)=g(-u)$.
For $\ve>0$, define
\begin{equation}\label{eq:kuz-appro}
  \begin{aligned}
    F_\ve(u,k) &:=
    \begin{cases}
      \,|u-k|-\ve, &|u-k| > 2\ve,\\
      \,\ve g(\ve^{-1}(u-k)), &|u-k| \le 2\ve,
    \end{cases}
    \\
    Q_\ve(u,k) &:= \int_k^u \partial_uF_\ve(w,k)J'(w)dw.
  \end{aligned}
\end{equation}
Observe that $(F_\ve,Q_\ve)(\cdot,k)$ is a Lax entropy--flux pair for each $\ve$, $(F_\ve,Q_\ve)(\cdot,k) \rightrightarrows (\eta,\xi)(\cdot,k)$ as $\ve\to0$, and
\begin{align}
  \partial_uF_\ve(u,k) =
  \begin{cases}
    \sgn(u-k), &|u-k|>2\ve,\\
    g'(\ve^{-1}(u-k)), &|u-k|\le2\ve.
  \end{cases}
\end{align}

Suppose that \eqref{eq:ei-bd} holds for all Lax entropy--flux pairs $(f,q)$.
To get \eqref{eq:ei-bd-kuz}, it suffices to take $(F_\ve,Q_\ve)(\cdot,k)$ in \eqref{eq:ei-bd} and let $\ve\to0$.
On the other hand, assume \eqref{eq:ei-bd-kuz} for all $k\in\bR$.
Since $k$ can be chosen smaller than $-\|u\|_{L^\infty(\Sigma)}$, \eqref{eq:ei-bd} is true for the linear entropy $f = u$ and the corresponding flux $q = J$.
Then, one only needs to use the fact that any Lax entropy--flux pair $(f,q)$ is contained in the convex hull of $(\eta,\xi)(\cdot,k)$ and $(u,J)$.
\end{proof}

The proofs of Proposition \ref{prop:kuz-in} and \ref{prop:kuz-non} are standard.
Below we assume the non-integrable case and prove Proposition \ref{prop:kuz-non} as an example.

\begin{proof}
The first argument follows directly from the previous lemma.
To verify the $L^1$-continuity at $t=0$, we use the idea in \cite[Lemma 2.7.34, 2.7.41]{MNRR96}.
For any $\phi\in\cC_c^2(\bR)$ and $\psi\in\cC_c^2((0,1))$ such that $\phi$, $\psi\ge0$, let $\vf=\phi(t)\psi(x)$.
The entropy product in \eqref{eq:ent-prod} reads
\begin{equation}
  \begin{aligned}
    E_\vf^{(\eta,\xi)(\cdot,k)}(u) &=\iint_\Sigma |u-k|\phi'\psi\,dxdt\\
    &+ \iint_\Sigma \big[\xi(u,k)\psi' - \sgn(u-k)V(x)(u-\varrho)\psi\big]\phi\,dxdt.
  \end{aligned}
\end{equation}
Recall that $u_0 \in L^\infty((0,1))$ and $u \in L^\infty(\Sigma)$.
Let $M=\|u_0\|_{L^\infty}+\|u\|_{L^\infty}$ and for $k\in[-M,M]$, $|\xi(u,k)| \le 2\sup_{[-M,M]} |J|$.
Hence, the second line above is bounded by
\begin{align}
  \left[ C_M\sup |\psi'| + \big(\|u\|_{L^\infty}+\|\varrho\|_{L^\infty}\big)\int_0^1 V(x)\psi(x)dx \right] \int_0^\infty \phi(t)dt.
\end{align}
Since $\psi$ is compactly supported within $(0,1)$,
\begin{align}
  E_\vf^{(\eta,\xi)(\cdot,k)}(u) \le \iint_\Sigma |u-k|\phi'\psi\,dxdt + C\int_0^\infty \phi(t)dt.
\end{align}
The generalized entropy inequality \eqref{eq:ei-kuz} then yields that
\begin{align}\label{eq:initial-1}
  \phi(0)\int_0^1 |u_0(x)-k|\psi(x)dx + \int_0^\infty F_{k,\psi}(t)\phi'(t)dt \ge 0,
\end{align}
where the function $F_{k,\psi}:(0,\infty)\to\bR$ is defined as
\begin{align}
  F_{k,\psi}(t) := \int_0^1 |u(t,x)-k|\psi(x)dx-Ct.
\end{align}
From \eqref{eq:initial-1}, after a possible modification on a set of zero measure, $F_{k,\psi}$ is non-increasing on $(0,\infty)$, and
\begin{align}
  \esslim_{t\to0+} F_{k,\psi}(t) \le \int_0^1 |u_0(x)-k|\psi(x)dx.
\end{align}
In other words, for all $\psi\in\cC_c^2((0,1))$ such that $\psi\ge0$,
\begin{align}\label{eq:initial-2}
  \esslim_{t\to0+} \int_0^1 |u(t,\cdot)-k|\psi\,dx \le \int_0^1 |u_0-k|\psi\,dx.
\end{align}
By a standard density argument, \eqref{eq:initial-2} holds for $\psi \in L^1((0,1))$ such that $\psi\ge0$.
One can approximate $v \in L^\infty((0,1))$ by simple functions taking only rational values to get
\begin{align}
  \esslim_{t\to0+} \int_0^1 |u(t,\cdot)-v|\psi\,dx \le \int_0^1 |u_0-v|\psi\,dx.
\end{align}
The result then follows by simply taking $v=u_0$ and $\psi\equiv1$.
\end{proof}

Next, we focus on the boundaries in the non-integrable case.
Pick a function $\psi\in\cC^\infty(\bR)$ such that
\begin{align}
  \supp \psi \in (0,\infty), \quad \psi|_{x\ge1} \equiv 1.
\end{align}
For $\ve>0$ and $x\in[0,1]$, define
\begin{align}\label{eq:aux-func}
  \psi_\ve(x) := \psi(\tfrac x\ve)\mathbf1_{\{x<\ve\}} + \mathbf1_{\{\ve \le x \le 1-\ve\}} + \psi(\tfrac{1-x}\ve)\mathbf1_{\{x>1-\ve\}}.
\end{align}
Then, $\psi_\ve\in\cC_c^\infty((0,1))$ and $\psi_\ve\to\mathbf1_{(0,1)}$ in $L^1((0,1))$ as $\ve\to0$.

\begin{lem}\label{lem:bd}
Suppose that \eqref{eq:ass-v-uniq} holds and $u \in L^\infty(\Sigma)$ satisfies \eqref{eq:bd-l2}.
Fix some $T>0$ and recall that $\Sigma_T=(0,T)\times(0,1)$.
Let $g$ be a measurable function on $\bR^3$ such that
\begin{align}
  \big|g(t,x,w)-g(t,x',w')\big| \le C\big(|x-x'|+|w-w'|\big)
\end{align}
for all $(t,x)$, $(t,x')\in\Sigma_T$ and $|w|$, $|w'| \le \|u\|_{L^\infty(\Sigma_T)}$.
Then,
\begin{align}
  \lim_{\ve\to0} \iint_{\Sigma_T} g(t,x,u)\psi'_\ve\,dxdt = \int_0^T \big[g(\cdot,0,\alpha(\cdot))-g(\cdot,1,\beta(\cdot))\big]dt.
\end{align}
\end{lem}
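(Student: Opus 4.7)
The plan is to split $\psi_\ve'$ into its two pieces supported near the boundaries of $(0,1)$ and treat each of them independently; by symmetry I describe only the contribution near $x=0$, the analysis at $x=1$ being identical up to a sign. From the explicit formula \eqref{eq:aux-func}, on $(0,\ve)$ one has $\psi_\ve'(x)=\ve^{-1}\psi'(x/\ve)$, and since $\psi(0)=0$, $\psi(1)=1$, the fundamental theorem of calculus yields $\int_0^\ve \psi_\ve'\,dx=1$ together with the uniform bound $\int_0^\ve|\psi_\ve'|\,dx=\|\psi'\|_{L^1}$. Analogously $\int_{1-\ve}^1\psi_\ve'\,dx=-1$. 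Using the first identity, the contribution near $x=0$ can be written as
\begin{align*}
  \int_0^T\!\!\int_0^\ve g(t,x,u(t,x))\psi_\ve'(x)\,dxdt &- \int_0^T g(t,0,\alpha(t))\,dt \\
  &= \int_0^T\!\!\int_0^\ve \bigl[g(t,x,u(t,x))-g(t,0,\alpha(t))\bigr]\psi_\ve'(x)\,dxdt,
\end{align*}
so the boundary values at the right-hand side of the claimed limit appear automatically, and the task reduces to showing that these remainders vanish as $\ve\to0$.

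Next I would use the Lipschitz hypothesis to bound the integrand by $C\ve+C|u(t,x)-\alpha(t)|$. Paired with $|\psi_\ve'|$, the first term contributes at most $C\ve\|\psi'\|_{L^1}T$, which is $O(\ve)$. The nontrivial piece is
\begin{align*}
  \frac{C\|\psi'\|_{L^\infty}}{\ve}\int_0^T\!\!\int_0^\ve |u(t,x)-\alpha(t)|\,dxdt,
\end{align*}
and this is exactly where the two standing hypotheses \eqref{eq:ass-v-uniq} and \eqref{eq:bd-l2} must conspire. I would apply \CS with the weight $V$,
\begin{align*}
  \int_0^\ve|u-\alpha|\,dx \le \Bigl(\int_0^\ve V(u-\alpha)^2\,dx\Bigr)^{1/2}\Bigl(\int_0^\ve\frac{dx}{V(x)}\Bigr)^{1/2},
\end{align*}
and invoke \eqref{eq:ass-v-uniq} to obtain $\int_0^\ve 1/V\le C\ve^2$, so the right-hand side is bounded by $C\ve(\int_0^\ve V(u-\alpha)^2\,dx)^{1/2}$. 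A further \CS in $t$ then controls the whole remainder by $C\sqrt T(\int_0^T\!\!\int_0^\ve V(u-\alpha)^2\,dxdt)^{1/2}$, which tends to $0$ by \eqref{eq:bd-l2}. The same computation at $x=1$, based on the $\beta$-half of \eqref{eq:bd-l2} and the $(1-x)/V$-bound in \eqref{eq:ass-v-uniq}, produces the term $-\int_0^T g(t,1,\beta(t))\,dt$; adding the two contributions and noting that $\psi_\ve'$ vanishes on $[\ve,1-\ve]$ gives the lemma.

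The main obstacle is not conceptual but quantitative: the derivative $\psi_\ve'$ carries a factor $\ve^{-1}$ that must be absorbed, and the only tools for doing so are the weighted $L^2$-trace \eqref{eq:bd-l2} and the reciprocal-integral bound \eqref{eq:ass-v-uniq}. The exponents in \eqref{eq:ass-v-uniq} are sharp for this argument: exactly $\int_0^\ve 1/V=O(\ve^2)$ is what allows \CS to swallow the $1/\ve$. This is precisely why the stronger assumption \eqref{eq:ass-v-uniq} (rather than mere non-integrability \eqref{eq:ass-v}) has been introduced and why the lemma is the technical heart of the boundary analysis for both Theorem \ref{thm:uniq} and Theorem \ref{thm:bd}.
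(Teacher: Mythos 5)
Your proof is correct and follows essentially the same route as the paper: split $\psi_\ve'$ into its two boundary pieces, use $\int_0^\ve\psi_\ve'=1$ and the Lipschitz bound on $g$, and trade the resulting $\ve^{-1}\int|u-\alpha|$ against \eqref{eq:bd-l2} and \eqref{eq:ass-v-uniq}. The only (cosmetic) difference is that you absorb the $\ve^{-1}$ by a weighted \CS inequality in one step, whereas the paper uses Young's inequality with a free parameter $\delta$ and sends $\delta\to0$ after $\ve\to0$; the two are equivalent.
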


\begin{proof}
From the definition of $\psi_\ve$,
\begin{align}
  \iint_{\Sigma_T} g(\cdot,\cdot,u)\psi'_\ve\,dxdt = \int_0^T \left( \int_0^\ve+\int_{1-\ve}^1 \right) g(\cdot,\cdot,u)\psi'_\ve\,dxdt.
\end{align}
Noting that the integral of $\psi'_\ve(x)$ from $0$ to $\ve$ is $1$,
\begin{equation}
  \begin{aligned}
    &\left| \int_0^T \int_0^\ve g(t,x,u(t,x))\psi'_\ve(x)dxdt - \int_0^T g(t,0,\alpha(t))dt \right|\\
    \le\,&\int_0^T \int_0^\ve \big|g(t,x,u(t,x))-g(t,0,\alpha(t))\big|\psi'_\ve(x)dxdt
  \end{aligned}
\end{equation}
The condition of $g$ together with the fact that $|\psi'_\ve| \le C\ve^{-1}$ yields that the last line is bounded from above by
\begin{align}
  \frac C\ve\int_0^T \int_0^\ve \big(x+|u(t,x)-\alpha(t)|\big)dxdt.
\end{align}
Applying \CS inequality, we obtain the upper bound
\begin{equation}
  \frac{CT\ve}2 + \frac1\delta\int_0^T \int_0^\ve V(x)(u-\alpha)^2dxdt + \frac{C^2T\delta}{4\ve^2}\int_0^\ve \frac1{V(x)}dx,
\end{equation}
for any $\delta>0$.
Taking first $\ve\to0$ and then $\delta$ sufficiently small, we have
\begin{align}
  \lim_{\ve\to0} \int_0^T \int_0^\ve g(t,x,u(t,x))\psi'_\ve\,dxdt = \int_0^T g(t,0,\alpha(t))dt.
\end{align}
The integral over $(1-\ve,1)$ can be treated similarly.
\end{proof}

\section{Uniqueness of the entropy solution}
\label{sec:uniq}

In this section, we first prove the uniqueness of the entropy solution in the non-integrable case, then briefly summarize the difference when $V$ is integrable.
In the non-integrable case, the uniqueness is a direct consequence of the stability below.

\begin{thm}\label{thm:uniq-non}
Assume \eqref{eq:ass-rho-l2}, \eqref{eq:ass-v-uniq} and let $u$, $v$ satisfy Definition \ref{defn:non} with the same $(\alpha,\beta)$ and different $(u_0,\varrho)$, $(v_0,\varrho_*)$, respectively.
Then, for almost all $t>0$,
\begin{align}
  \int_0^1 |u(t,\cdot)-v(t,\cdot)|dx \le \int_0^1 |u_0-v_0|dx + \iint_{\Sigma_t} V(x)|\varrho-\varrho_*|\,dxds.
\end{align}
\end{thm}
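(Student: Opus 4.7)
The strategy is Kruzhkov's doubling of variables technique, adapted to the present setting in which all spatial-boundary information must be extracted from the energy bound (EB) rather than from an explicit boundary entropy inequality.

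First, I would apply the Kruzhkov inequality \eqref{eq:ei-kuz} to $u = u(t,x)$ with the constant $k = v(s,y)$, and to $v = v(s,y)$ with $k = u(t,x)$, using a product mollifier test function, then sum and pass to the diagonal limit. The relaxation terms combine cleanly via
\begin{align*}
\sgn(u-v)\big[G(t,x,u)-G(t,x,v)\big] = V(x)|u-v| \geq 0,
\end{align*}
and the standard Kruzhkov computation yields, for any $\vf\in\cC_c^2(\bR\times(0,1))$ with $\vf\ge0$,
\begin{align*}
\iint_\Sigma \Big\{|u-v|\partial_t\vf + \Phi(u,v)\partial_x\vf\Big\}dxdt \ge \iint_\Sigma V(x)|u-v|\vf\,dxdt,
\end{align*}
where $\Phi(u,v):=\sgn(u-v)[J(u)-J(v)]$.

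Next, I would specialize to $\vf(t,x) = \chi(t)\psi_\ve(x)$ with $\psi_\ve$ from \eqref{eq:aux-func} and $\chi\in\cC_c^1((0,\infty))$, $\chi\ge0$. Differentiating $\psi_\ve$ produces a boundary contribution $\iint \Phi(u,v)\chi\psi'_\ve\,dxdt$. Since $\Phi(\alpha,\alpha)=\Phi(\beta,\beta)=0$ and $\Phi$ is Lipschitz in its two arguments on the $L^\infty$ range of $(u,v)$, an extension of Lemma \ref{lem:bd} to the two-solution quantity $\Phi(u,v)$ — obtained by writing $|\Phi(u,v)-\Phi(\alpha,\alpha)|\le C(|u-\alpha|+|v-\alpha|)$ and applying the one-solution estimate to each summand — shows that this boundary contribution vanishes as $\ve\to0$. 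Monotone convergence on the right-hand side and dominated convergence on the time-derivative term then yield
\begin{align*}
\int_0^\infty \chi'(t)F(t)\,dt \ge \iint_\Sigma V(x)|u-v|\chi\,dxdt \ge 0, \qquad F(t):=\int_0^1 |u(t,x)-v(t,x)|\,dx.
\end{align*}
Choosing $\chi$ to approximate $\mathbf{1}_{(s,t)}$ for arbitrary $0<s<t$ gives $F(s)\ge F(t)$ after modification on a null set, so $F$ admits a non-increasing representative. Sending $s\to0+$ via the $L^1$ initial trace in Proposition \ref{prop:kuz-non} delivers $F(t)\le \int_0^1 |u_0-v_0|\,dx$, which is the stated stability; uniqueness is the case $u_0=v_0$.

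The main obstacle is the absence of any explicit boundary datum in (EI): the cancellation of the $\Phi(u,v)\psi'_\ve$ term against the boundary layer has to be produced entirely from (EB). The strengthened hypothesis \eqref{eq:ass-v-uniq} is essential here because it upgrades the $L^2$-weighted boundary control \eqref{eq:bd-l2} to the $L^1$-level pointwise trace information that a Lipschitz quantity such as $\Phi(u,v)$ requires; the weaker assumption \eqref{eq:ass-v} alone would not allow this step to close.
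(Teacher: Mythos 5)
Your proposal is correct and follows essentially the same route as the paper: Kruzhkov doubling of variables to obtain the two-solution entropy inequality for test functions supported in $(0,1)$ (the paper's Lemma \ref{lem:kuz}, where the combined relaxation term $V(x)|u-v|\vf\ge0$ has the right sign), then testing with $\phi(t)\psi_\ve(x)$ and killing the $\xi(u,v)\psi'_\ve$ boundary contribution via Lemma \ref{lem:bd} (hence via \eqref{eq:ass-v-uniq} and \eqref{eq:bd-l2}), and finally the monotonicity of $t\mapsto\int_0^1|u-v|\,dx$ combined with the $L^1$ initial trace. The only cosmetic difference is how the Lipschitz flux $\xi(u,v)$ is reduced to the one-solution boundary estimate (your triangle inequality against $\Phi(\alpha,\alpha)$ versus the paper's comparison with $|w-\tfrac{\alpha+\beta}2|$), which is immaterial.
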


The next Kruzhkov-type lemma plays a key role in the proof.
Since we choose the test function $\vf$ to be compactly supported in $\Sigma$, the integrability of $V$ is indeed irrelevant to either the statement or the proof.

\begin{lem}\label{lem:kuz}
For all $\vf\in\cC_c^2(\Sigma)$ such that $\vf\ge0$,
\begin{align}\label{eq:kuz}
  \iint_\Sigma \big[|u-v|\partial_t\vf + \xi(u,v)\partial_x\vf + |\varrho-\varrho_*|V\vf\big]dxdt \ge 0.
\end{align}
\end{lem}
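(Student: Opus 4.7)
The plan is to adapt Kruzhkov's doubling of variables technique to the present setting. A crucial simplification is that $\vf$ is compactly supported in the open set $\Sigma=(0,\infty)\times(0,1)$, so its support is a compact set $K$ bounded away from $\{t=0\}$ and $\{x=0,1\}$. In particular $V$ is bounded and uniformly continuous on $K$, and no initial-time or spatial-boundary contribution will appear. This reduces the argument to a purely interior one, independent of the subtle boundary information in \eqref{eq:bd-l2} and Lemma \ref{lem:bd}.

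First I will fix a standard nonnegative mollifier $\rho_\ve\in\cC_c^\infty(\bR)$ and consider the doubled test function
$$\Phi_\ve(t,x,s,y):=\vf\!\left(\tfrac{t+s}{2},\tfrac{x+y}{2}\right)\rho_\ve(t-s)\rho_\ve(x-y),$$
which, for $\ve$ small enough, gives valid nonnegative test functions $\Phi_\ve(\cdot,\cdot,s,y)$ and $\Phi_\ve(t,x,\cdot,\cdot)$ in $\cC_c^2(\Sigma)$ for every fixed slice of variables. Next I will apply \eqref{eq:ei-kuz} to $u(t,x)$ with $k=v(s,y)$ against the test function $\Phi_\ve(\cdot,\cdot,s,y)$, integrate in $(s,y)$, and perform the symmetric step with $u$ and $v$ swapped and $k=u(t,x)$. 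Adding the two inequalities, and using $\xi(u,v)=\xi(v,u)$ together with $\sgn(v-u)=-\sgn(u-v)$, I will obtain
\begin{align*}
&\iiiint_{\Sigma\times\Sigma}\Big[|u-v|(\partial_t+\partial_s)\Phi_\ve+\xi(u,v)(\partial_x+\partial_y)\Phi_\ve\Big]\,dxdtdyds\\
&\quad\ge\iiiint_{\Sigma\times\Sigma}\sgn(u-v)\big[V(x)(u-\varrho(t,x))-V(y)(v-\varrho(s,y))\big]\Phi_\ve\,dxdtdyds.
\end{align*}

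The third step is to pass $\ve\to0$. The product structure of $\Phi_\ve$ makes the mollifier derivatives cancel in the sums $(\partial_t+\partial_s)\Phi_\ve=(\partial_t\vf)\rho_\ve\rho_\ve$ and $(\partial_x+\partial_y)\Phi_\ve=(\partial_x\vf)\rho_\ve\rho_\ve$ evaluated at the midpoint. After the change of variables $\tau=(t+s)/2$, $\sigma=(t-s)/2$ (and analogously in space), Lebesgue's differentiation theorem applied to the $L^\infty$ data $u,v,\varrho$ on the compact set $K$ will send the left-hand side to $\iint_\Sigma[|u-v|\partial_t\vf+\xi(u,v)\partial_x\vf]\,dxdt$ and the right-hand side to $\iint_\Sigma V(x)|u-v|\vf\,dxdt\ge0$. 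Dropping this nonnegative right-hand side yields \eqref{eq:kuz}.

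The main obstacle is verifying that the source contribution collapses to a manifestly nonnegative quantity in the limit; this is the algebraic mechanism by which the relaxation is compatible with the $L^1$-contraction estimate. The key identity is $\sgn(u-v)\big[(u-\varrho)-(v-\varrho)\big]=|u-v|$, so once $V(y)$ can be replaced by $V(x)$ and $\varrho(s,y)$ by $\varrho(t,x)$ pointwise under the mollification, the $\varrho$-terms cancel exactly and only $V(x)|u-v|\vf\ge0$ survives. Uniform continuity of $V$ on the compact $x$-support of $\vf$ (from hypothesis (h2)) handles the first replacement, while Lebesgue's differentiation theorem for the $L^\infty$ function $\varrho$ handles the second; the singularity of $V$ at $\{0,1\}$ never enters the argument.
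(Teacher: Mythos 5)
Your proposal is correct and follows essentially the same route as the paper: Kruzhkov doubling of variables with a midpoint-times-mollifier test function, symmetrization of the two entropy inequalities, and the observation that the diagonal part of the relaxation term reduces to $V(x)|u-v|\vf\ge0$ while the off-diagonal errors (replacing $V(y)$ by $V(x)$ and $\varrho(s,y)$ by $\varrho(t,x)$) vanish by local boundedness of $V$ on the compact support and the Lebesgue differentiation theorem. The only cosmetic difference is that the paper merely uses nonnegativity of the diagonal source term rather than identifying its limit, but this changes nothing.
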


\begin{proof}
Let $T>0$ be fixed and we verify \eqref{eq:kuz} for $\vf\in\cC_c^2(\Sigma_T)$.
Let $\phi\in\cC_c^\infty(\bR)$ be a mollifier such that
\begin{align}
  \supp \phi \subseteq (-1,1), \quad \phi(-\tau)=\phi(\tau), \quad \int_\bR \phi(\tau)d\tau=1.
\end{align}
For $\ve>0$, let $\phi_\ve(\tau,\zeta)=\ve^{-2}\phi(\ve^{-1}\tau)\phi(\ve^{-1}\zeta)$ and define
\begin{align}
  \Phi_\ve(t,x,s,y):=\vf \left( \frac{t+s}2,\frac{x+y}2 \right) \phi_\ve \left( \frac{t-s}2,\frac{x-y}2 \right).
\end{align}
Without loss of generality, fix $T=1$.
Since $\vf\in\cC_c^2(\Sigma_1)$, choose $\delta>0$ such that $\supp \vf\subseteq[\delta,1-\delta]^2$.
The support of $\Phi_\ve$ is then contained in
\begin{equation}
  \begin{aligned}
    t+s\in[2\delta,2-2\delta], \quad t-s\in(-2\ve,2\ve),\\
    x+y\in[2\delta,2-2\delta], \quad x-y\in(-2\ve,2\ve).
  \end{aligned}
\end{equation}
Direct computation shows that $\Phi_\ve\in\cC_c^2(\Sigma_1^2)$ for all $\ve\in(0,\delta)$.

Hereafter, we assume $\ve\in(0,\delta)$.
Fixing $(s,y)\in\Sigma_1$ and applying \eqref{eq:ei-kuz} with $k=v(s,y)$ and $\vf=\Phi_\ve(\cdot,\cdot,s,y)\in\cC_c^2(\Sigma_1)$,
\begin{equation}
  \begin{aligned}
    E_{\Phi_\ve(\cdot,\cdot,s,y)}^{(\eta,\xi)(\cdot,v(s,y))}(u) = \iint_{\Sigma_1} |u-v(s,y)|\partial_t\Phi_\ve(\cdot,\cdot,s,y)dxdt\\
    + \iint_{\Sigma_1} \xi(u,v(s,y))\partial_x\Phi_\ve(\cdot,\cdot,s,y)dxdt\\
    - \iint_{\Sigma_1} \sgn(u-v(s,y))V(x)(u-\varrho)\Phi_\ve(\cdot,\cdot,s,y)dxdt \ge 0.
  \end{aligned}
\end{equation}
Similar inequality holds for $v=v(s,y)$, $k=u(t,x)$ and $\vf=\Phi_\ve(t,x,\cdot,\cdot)$.
Denote $u_1=u(t,x)$, $v_1=v(s,y)$, then
\begin{equation}\label{eq:lem-kuz-1}
  \begin{aligned}
    \iint_{\Sigma_1} E_{\Phi_\ve(\cdot,\cdot,s,y)}^{(\eta,\xi)(\cdot,v(s,y))}(u)dsdy + \iint_{\Sigma_1} E_{\Phi_\ve(t,x,\cdot,\cdot)}^{(\eta,\xi)(\cdot,u(t,x))}(v)dxdt\\
    = \iiiint_{\Sigma_1^2} \Big\{|u_1-v_1|(\partial_t+\partial_s)\Phi_\ve + \xi(u_1,v_1)(\partial_x+\partial_y)\Phi_\ve\\
    -\sgn(u_1-v_1)\big[G(t,x,u_1)-G_*(s,y,v_1)\big]\Phi_\ve\Big\}dydsdxdt \ge 0,
  \end{aligned}
\end{equation}
where $G(t,x,u)=V(x)(u-\varrho(t,x))$ and $G_*(s,y,v)=V(y)(v-\varrho_*(s,y))$.

Introduce the coordinates $\boldsymbol\la=(\la_1,\la_2)$, $\boldsymbol\theta=(\theta_1,\theta_2)$ given by
\begin{align}
  \boldsymbol\la= \left( \frac{t+s}2,\frac{x+y}2 \right), \quad \boldsymbol\theta= \left( \frac{t-s}2,\frac{x-y}2 \right).
\end{align}
Recall that $\Phi_\ve=\vf(\boldsymbol\la)\psi_\ve(\boldsymbol\theta)$.
Direct computation shows that
\begin{equation}
  \begin{aligned}
    &(\partial_t+\partial_s)\Phi_\ve = \phi_\ve(\boldsymbol\theta)\partial_{\la_1}\vf(\boldsymbol\la),\\
    &(\partial_x+\partial_y)\Phi_\ve = \phi_\ve(\boldsymbol\theta)\partial_{\la_2}\vf(\boldsymbol\la).
  \end{aligned}
\end{equation}
Define $\Omega := \{(\boldsymbol\la,\boldsymbol\theta);\boldsymbol\la+\boldsymbol\theta\in[0,1]^2,\boldsymbol\la-\boldsymbol\theta\in[0,1]^2\}$ and
\begin{equation}
  \begin{aligned}
    &\mathcal I = |u_1-v_1|\partial_{\la_1}\vf(\boldsymbol\la) + \xi(u_1,v_1)\partial_{\la_2}\vf(\boldsymbol\la),\\
    &\mathcal G = \sgn(u_1-v_1)\big[G(\boldsymbol\la+\boldsymbol\theta,u_1)-G_*(\boldsymbol\la-\boldsymbol\theta,v_1)\big]\vf(\boldsymbol\la).
  \end{aligned}
\end{equation}
Then, \eqref{eq:lem-kuz-1} is rewritten as $\mathcal T_\ve-\mathcal R_\ve \ge 0$ for $\ve\in(0,\delta)$, where
\begin{align}
  \mathcal T_\ve = \int_\Omega \mathcal I(\boldsymbol\la,\boldsymbol\theta)\phi_\ve(\boldsymbol\theta)d(\boldsymbol\la,\boldsymbol\theta), \quad \mathcal R_\ve = \int_\Omega \mathcal G(\boldsymbol\la,\boldsymbol\theta)\phi_\ve(\boldsymbol\theta)d(\boldsymbol\la,\boldsymbol\theta).
\end{align}
Using the argument in \cite[Theorem 1]{Kruzhkov70}, one can show that
\begin{align}\label{eq:lem-kuz-2}
  \lim_{\ve\to0} \mathcal T_\ve = \iint_{\Sigma_1} \mathcal I(\boldsymbol\la,\mathbf0)d\boldsymbol\la,
\end{align}
see also \cite[Lemma 2.5.21]{MNRR96}.
Decompose $\mathcal G$ as $\mathcal G_1+\mathcal G_2+\mathcal G_3$, where
\begin{equation}
  \begin{aligned}
    \mathcal G_1 &= \sgn(u_1-v_1)\big[G(\boldsymbol\la,u_1)-G_*(\boldsymbol\la,v_1)\big]\vf(\boldsymbol\la),\\
    \mathcal G_2 &= \sgn(u_1-v_1)\big[G(\boldsymbol\la+\boldsymbol\theta,u_1)-G(\boldsymbol\la,u_1)\big]\vf(\boldsymbol\la),\\
    \mathcal G_3 &= \sgn(u_1-v_1)\big[G_*(\boldsymbol\la,v_1)-G_*(\boldsymbol\la-\boldsymbol\theta,v_1)\big]\vf(\boldsymbol\la).
  \end{aligned}
\end{equation}
Recall that $G(\boldsymbol\la,u_1)=V(\la_2)(u_1-\varrho(\boldsymbol\la))$.
Then,
\begin{equation}\label{eq:kuz-1}
  \begin{aligned}
    |\mathcal G_2| \le\,&V(\la_2)\big|\varrho(\boldsymbol\la+\boldsymbol\theta) - \varrho(\boldsymbol\la)\big|\vf(\boldsymbol\la)\\
    &+\big|V(\la_2+\theta_2)-V(\la_2)\big|\big|u_1-\varrho(\boldsymbol\la+\boldsymbol\theta)\big|\vf(\boldsymbol\la).
  \end{aligned}
\end{equation}
Since $\supp\vf \subseteq [\delta,1-\delta]^2$ and $u$, $\varrho \in L^\infty(\Sigma_1)$,
\begin{equation}\label{eq:kuz-2}
  \begin{aligned}
    \left| \int_\Omega \mathcal G_2(\boldsymbol\la,\boldsymbol\theta)\phi_\ve(\boldsymbol\theta)\,d(\boldsymbol\la,\boldsymbol\theta) \right| \le \int_{\Sigma_1} \vf(\boldsymbol\la)d\boldsymbol\la \int_{\bR^2} \phi_\ve(\boldsymbol\theta)d\boldsymbol\theta\\
    \Big\{C_\delta\big|\varrho(\boldsymbol\la+\boldsymbol\theta)-\varrho(\boldsymbol\la)\big| + C\big|V(\la_2+\theta_2)-V(\la_2)\big|\Big\},
  \end{aligned}
\end{equation}
with $C_\delta:=\sup\{V(x);\delta \le x \le 1-\delta\}$.
For sufficiently small $\ve$, both $\varrho$ and $V$ are bounded on $[\delta-\ve,1-\delta+\ve]^2$.
Then, the definition of $\phi_\ve$ and the Lebesgue differentiation theorem show that this term vanishes as $\ve\to0$.
The integral of $\mathcal G_3$ is treated similarly.
Finally,
\begin{align}
  \mathcal G_1 = |u_1-v_1|V(\la_2)\vf(\boldsymbol\la) - \sgn(u_1-v_1)V(\la_2)\big[\varrho(\boldsymbol\la)-\varrho_*(\boldsymbol\la)\big]\vf(\boldsymbol\la),
\end{align}
so that $\mathcal G_1\ge-V(\la_2)|\varrho(\boldsymbol\la)-\varrho_*(\boldsymbol\theta)|\vf(\boldsymbol\la)$.
Therefore,
\begin{align}\label{eq:lem-kuz-3}
  \liminf_{\ve\to0} \mathcal R_\ve \ge -\iint_{\Sigma_1} V(\la_2)\big|\varrho(\boldsymbol\la)-\varrho_*(\boldsymbol\la)\big|\vf(\boldsymbol\la)d\boldsymbol\la.
\end{align}
Recall that from \eqref{eq:lem-kuz-1}, we have $\mathcal T_\ve-\mathcal R_\ve\ge0$.
By \eqref{eq:lem-kuz-2} and \eqref{eq:lem-kuz-3},
\begin{align}
  \iint_{\Sigma_1} \mathcal I(\boldsymbol\la,\mathbf0)d\boldsymbol\la \ge -\iint_{\Sigma_1} V(\la_2)\big|\varrho(\boldsymbol\la)-\varrho_*(\boldsymbol\la)\big|\vf(\boldsymbol\la)d\boldsymbol\la.
\end{align}
The desired inequality follows directly.
\end{proof}

\begin{proof}[Proof of Theorem \ref{thm:uniq-non}]
First, observe that the estimate is trivial if the integral of $V|\varrho-\varrho_*|$ is infinite.
Hereafter, we assume that $\varrho-\varrho_* \in L^1(\Sigma_T;\nu)$, where $\nu(dxdt)=V(x)dxdt$.

Fix an arbitrary $\phi\in\cC_c^2((0,T))$ such that $\phi\ge0$ and recall the function $\psi_\ve$ given by \eqref{eq:aux-func}.
Using Lemma \ref{lem:kuz} with $\vf=\phi(t)\psi_\ve(x)$,
\begin{align}
  \iint_{\Sigma_T} \big[|u-v|\phi'\psi_\ve + \xi(u,v)\psi'_\ve\phi + |\varrho-\varrho_*|V\phi\psi_\ve\big]dxdt \ge 0.
\end{align}
Taking $\ve\to0$, we have
\begin{equation}
  \begin{aligned}
    &\lim_{\ve\to0} \iint_{\Sigma_T} \big(|u-v|\phi' + |\varrho-\varrho_*|V\phi\big)\psi_\ve\,dxdt\\
    =\,&\iint_{\Sigma_T} \big(|u-v|\phi' + |\varrho-\varrho_*|V\phi\big)dxdt.
  \end{aligned}
\end{equation}
Notice that the convergence of the second term follows from $\varrho-\varrho_* \in L^1(\Sigma_T;\nu)$.
We are left with the integral of $\xi(u,v)\psi'_\ve\phi$.
From the construction of $\psi'_\ve$, this term is identically $0$ for $x\in[\ve,1-\ve]$.
Using the same argument as in Lemma \ref{lem:bd}, for any $\delta>0$,
\begin{equation}
  \int_0^T \int_0^\ve \xi(u,v)\psi'_\ve\phi\,dxdt \le \frac1\delta\int_0^T \int_0^\ve \xi^2(u,v)V\,dxdt + \frac{CT\delta^2}{4\ve^2}\int_0^\ve \frac{dx}V,
\end{equation}
with a constant $C$ depending on $\phi$.
From \eqref{eq:ass-v-uniq}, the second term vanishes as $\delta\to0$, uniformly in $\ve$.
Also observe that
\begin{align}
  |\xi(u,v)| = |J(u)-J(v)| \le C|u-v|.
\end{align}
Since $u$ and $v$ satisfy \eqref{eq:bd-l2} with common boundary data $\alpha$, the first term vanishes as $\ve\to0$ for any fixed $\delta>0$.
By repeating the argument for the integral on $(1-\ve,1)$,
\begin{align}
  \lim_{\ve\to0} \iint_{\Sigma_T} \xi(u,v)\psi_\ve\phi\,dxdt = 0.
\end{align}

Putting these estimates together,
\begin{align}
  \int_0^T \left[ \phi'(t)\int_0^1 |u-v|dx + \phi(t)\int_0^1 V(x)|\varrho-\varrho_*|dx \right] dt \ge 0,
\end{align}
for all $\phi\in\cC_c^2((0,T))$ such that $\phi\ge0$.
From this,
\begin{align}
  t \mapsto \int_0^1 |u(t,\cdot)-v(t,\cdot)|dx - \iint_{\Sigma_t} V(x)|\varrho-\varrho_*|dxds
\end{align}
is an essentially decreasing function of $t$.
It suffices to apply the $L^1$-continuity of the entropy solution at $t=0$.
\end{proof}

When $V \in L^1((0,1))$, let $u$, $v$ be as in Definition \ref{defn:in} with $(\alpha,\beta,\varrho,u_0)$ and $(\alpha_*,\beta_*,\varrho_*,v_0)$, respectively.
Instead of Lemma \ref{lem:kuz}, the uniqueness follows from the next lemma.

\begin{lem}
For all $\vf\in\cC_c^2(\bR_+\times\bR)$ such that $\vf\ge0$,
\begin{equation}\label{eq:kuz-bd}
  \begin{aligned}
    &\iint_{\Sigma} \big[|u-v|\partial_t\vf + \xi(u,v)\partial_x\vf + |\varrho-\varrho_*|V\vf\big]dxdt \ge\\
    &\hspace{25mm} M\int_0^\infty \big[|\alpha-\alpha'|\vf(\cdot,0) + |\beta-\beta'|\vf(\cdot,1)\big]dt,
    \end{aligned}
\end{equation}
where the constant $M$ is the supreme of $|J'|$ between the essential infimum and supremum of $(\alpha,\alpha',\beta,\beta')$.
\end{lem}

The proof goes in the same line as that of Lemma \ref{lem:kuz}, with the boundary terms treated with the argument used in \cite[Theorem 2.7.28]{MNRR96}.
The only difference is that, when estimating $\mathcal G_2$, the support of $\vf$ contains boundary points.
Observe that $|\mathcal G_2|$ is bounded from above by
\begin{align}
  C_\vf\big[V(\la_2)|\varrho(\boldsymbol\la+\boldsymbol\theta) - \varrho(\boldsymbol\la)| + |V(\la_2+\theta_2)-V(\la_2)|\big].
\end{align}
As $V$ is integrable, almost every point in $(0,1)$ is a Lebesgue point of $V$.
This assures that the integral in \eqref{eq:kuz-2} vanishes when $\ve\to0$.

\section{Flux at boundary}
\label{sec:bd}

This section is devoted to the identification of the behavior of the entropy solution at the boundaries.
For the integrable case, \eqref{eq:bd-in} follows from \eqref{eq:ei-bd-kuz} and exactly the same argument as used in \cite[Theorem 2.7.31]{MNRR96}, so we focus on the non-integrable case and prove Theorem \ref{thm:bd}.
Hereafter, always assume \eqref{eq:ass-v-uniq} and that $\alpha$ and $\beta$ are almost everywhere constant functions.

\begin{lem}\label{lem:bd-1}
Assume \eqref{eq:ass-rho-l2}.
Let $u$ be as in Definition \ref{defn:non} and $(F,Q)$ be any boundary entropy--flux pair.
Then, for $\vf\in\cC_c^2(\bR\times(-\infty,1))$ such that $\vf\ge0$, we have $\partial_uF(u,\alpha)(u-\varrho)\vf \in L^1(\Sigma;\nu)$ and
\begin{align}
  E_\vf^{(F,Q)(\cdot,\alpha)}(u) + \int_0^1 F(u_0,\alpha)\vf(0,\cdot)dx \ge 0.
\end{align}
Similar result holds at the right boundary: for $\vf\in\cC_c^2(\bR\times(0,\infty))$ such that $\vf\ge0$, we have $\partial_uF(u,\beta)(u-\varrho)\vf \in L^1(\Sigma;\nu)$ and
\begin{align}
  E_\vf^{(F,Q)(\cdot,\beta)}(u) + \int_0^1 F(u_0,\beta)\vf(0,\cdot)dx \ge 0.
\end{align}
\end{lem}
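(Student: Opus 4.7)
The plan is to extend the entropy inequality \eqref{eq:ei} from test functions in $\cC_c^2(\bR\times(0,1))$ to test functions in $\cC_c^2(\bR\times(-\infty,1))$ via a cutoff near $x=0$ and a limiting procedure, exploiting the vanishing of $F(\cdot,\alpha)$ and $Q(\cdot,\alpha)$ at $u=\alpha$. Fix $\vf\in\cC_c^2(\bR\times(-\infty,1))$ with $\vf\ge0$, choose $T>0$ with $\supp\vf\subset[0,T]\times(-\infty,1)$, and set $\vf_\ve := \vf\,\psi_\ve$ with $\psi_\ve$ from \eqref{eq:aux-func}. Since $\vf(\cdot,1)\equiv 0$, for small $\ve$ we have $\vf_\ve\in\cC_c^2(\bR\times(0,1))$ and $\vf_\ve\ge 0$. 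By condition (1) of Definition \ref{defn:bd-ent}, the pair $(F(\cdot,\alpha),Q(\cdot,\alpha))$ is a Lax entropy--flux pair, so \eqref{eq:ei} applies with this pair and test function $\vf_\ve$.

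Next I would send $\ve\to 0$. Writing $\partial_x\vf_\ve = \psi_\ve\partial_x\vf + \vf\,\psi'_\ve$, the terms not involving $\psi'_\ve$ converge by dominated convergence to their analogues with $\vf$ in place of $\vf_\ve$. The only delicate contribution is $\iint_{\Sigma_T} Q(u,\alpha)\,\vf\,\psi'_\ve\,dxdt$, which is exactly of the form handled by Lemma \ref{lem:bd} with $g(t,x,w) := Q(w,\alpha)\vf(t,x)$; this $g$ is Lipschitz in $(x,w)$ on bounded sets, since $(F,Q)\in\cC^2(\bR^2;\bR^2)$ and $\vf\in\cC_c^2$. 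The limit equals $\int_0^T[Q(\alpha,\alpha)\vf(t,0)-Q(\beta,\alpha)\vf(t,1)]\,dt$, which vanishes thanks to $Q(\alpha,\alpha)=0$ (condition (2) of Definition \ref{defn:bd-ent}) and $\vf(\cdot,1)\equiv 0$.

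The main obstacle is the integrability of the source term $V(x)\partial_uF(u,\alpha)(u-\varrho)\vf$, needed both to apply dominated convergence in the source integral and to obtain the claimed $L^1(\Sigma;\nu)$ assertion. I would exploit condition (2) of Definition \ref{defn:bd-ent}, namely $\partial_uF(\alpha,\alpha)=0$, together with $F\in\cC^2$, to conclude $|\partial_uF(u,\alpha)|\le C|u-\alpha|$ uniformly for $u$ in the $L^\infty$ range of the problem. Using $|u-\varrho|\le|u-\alpha|+|\varrho-\alpha|$ and Young's inequality, the integrand is dominated by a multiple of $V(x)(u-\alpha)^2+V(x)(\varrho-\alpha)^2$. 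On the part of $\supp\vf$ near $x=0$, the first term is integrable by \eqref{eq:bd-l2} (which follows from (EB) and \eqref{eq:ass-rho-l2}) and the second by \eqref{eq:ass-rho-l2} directly; away from $x=0$, where $V$ is bounded (and $\vf$ vanishes near $x=1$ automatically), the $L^\infty$ bounds on $u$, $\varrho$, $\alpha$ trivially suffice.

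Combining these limits delivers $E_\vf^{(F,Q)(\cdot,\alpha)}(u) + \int_0^1 F(u_0,\alpha)\vf(0,\cdot)\,dx\ge 0$. The right-boundary assertion is obtained identically, applied to $\vf\in\cC_c^2(\bR\times(0,\infty))$ and with $(F,Q)(\cdot,\alpha)$ replaced by $(F,Q)(\cdot,\beta)$, the only change being that now $\vf(\cdot,0)\equiv 0$ kills the left endpoint contribution in the Lemma \ref{lem:bd} limit while $Q(\beta,\beta)=0$ kills the right one.
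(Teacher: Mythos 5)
Your proposal is correct and follows essentially the same route as the paper: cut off with $\psi_\ve$, invoke \eqref{eq:ei} for the Lax pair $(F,Q)(\cdot,\alpha)$, pass to the limit using Lemma \ref{lem:bd} together with $Q(\alpha,\alpha)=0$ and $\vf(\cdot,1)\equiv0$ to kill the $\psi'_\ve$ contribution, and justify the source term via $|\partial_uF(u,\alpha)|\le C|u-\alpha|$ and the energy bounds. The only (immaterial) difference is the arithmetic of the domination: the paper bounds $|(u-\alpha)(u-\varrho)|$ by $(u-\varrho)^2+|(\varrho-\alpha)(u-\varrho)|$ and uses \textnormal{(EB)} with \eqref{eq:ass-rho-l2}, whereas you bound it by $V(u-\alpha)^2+V(\varrho-\alpha)^2$ and use \eqref{eq:bd-l2}; both are valid.
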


\begin{proof}
Let $K=(-\infty,T]\times(-\infty,y]$ for some $T>0$ and $y<1$.
Denote $(f,q)=(F,Q)(\cdot,\alpha)$.
By its definition, $|f'(u)| \le C_F|u-\alpha|$.
Then,
\begin{equation}
  \begin{aligned}
    |f'(u)(u-\varrho)\mathbf1_K| &\le C_F|(u-\alpha)(u-\varrho)\mathbf1_K|\\
    &\le C_F\big(|(u-\varrho)^2\mathbf1_K| + |(\varrho-\alpha)(u-\varrho)\mathbf1_K|\big).
  \end{aligned}
\end{equation}
By \eqref{eq:eb} and \eqref{eq:ass-rho-l2}, both terms belong to $L^1(\Sigma;\nu)$.
Hence, for all $\vf\in\cC_c^2(\bR\times(-\infty,1))$, $f'(u)(u-\varrho)\vf \in L^1(\Sigma;\nu)$.

Fix $\vf\in\cC_c^2(\bR\times(-\infty,1))$ such that $\vf\ge0$.
Let $\vf_\ve=\vf\psi_\ve$, where $\psi_\ve=\psi_\ve(x)$ is given by \eqref{eq:aux-func}.
Since $\alpha$ is constant, from \eqref{eq:ei},
\begin{align}
  E_{\vf_\ve}^{(f,q)}(u)  + \int_0^1 f(u_0)\vf_\ve(0,\cdot)dx \ge 0.
\end{align}
Taking $\ve\to0$, it is straightforward to see that
\begin{equation}
  \begin{aligned}
    \lim_{\ve\to0} \int_0^1 f(u_0)\vf_\ve(0,\cdot)dx + \iint_\Sigma f(u)\partial_t\vf_\ve\,dxdt\\
    = \int_0^1 f(u_0)\vf(0,\cdot)dx + \iint_\Sigma f(u)\partial_t\vf\,dxdt.
  \end{aligned}
\end{equation}
Using Lemma \ref{lem:bd}, since $q(\alpha)=Q(\alpha,\alpha)=0$ and $\vf(t,1)=0$,
\begin{align}
  \lim_{\ve\to0} \iint_\Sigma q(u)\partial_x\vf_\ve\,dxdt = \iint_\Sigma q(u)\partial_x\vf\,dxdt.
\end{align}
Recall that $G(\cdot,\cdot,u)=V(x)(u-\varrho)$ and $f'(u)(u-\varrho)\vf \in L^1(\Sigma;\nu)$, the dominated convergence theorem yields that
\begin{align}
  \lim_{\ve\to0} \iint_\Sigma f'(u)G(\cdot,\cdot,u)\vf_\ve\,dxdt = \iint_\Sigma f'(u)G(\cdot,\cdot,u)\vf\,dxdt.
\end{align}
Putting them together, we obtain the first assertion in the lemma.
The second one follows similarly.
\end{proof}

To continue, we make use of the condition \eqref{eq:ass-rho-l1} to refine the energy bound \eqref{eq:eb} to the following $L^1$-integrability.

\begin{prop}\label{prop:l1}
Assume \eqref{eq:ass-rho-l1}, then $u-\varrho \in L^1(\Sigma_T;\nu)$, i.e.,
\begin{align}
  \iint_{\Sigma_T} V(x)|u(t,x)-\varrho(t,x)|\,dxdt < \infty, \quad \forall\,T>0.
\end{align}
\end{prop}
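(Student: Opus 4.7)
The plan is to decompose $\Sigma_T$ into a thin layer $(0,T)\times(0,\delta)$ near $x=0$, a symmetric layer near $x=1$, and an interior $(0,T)\times[\delta,1-\delta]$. On the interior $V$ is bounded and $u,\varrho\in L^\infty$, so the contribution is trivially finite. By the triangle inequality $|u-\varrho|\le|u-\alpha|+|\alpha-\varrho|$, and since the $|\alpha-\varrho|$ piece on the left layer is controlled directly by \eqref{eq:ass-rho-l1}, the problem reduces to showing
\begin{align*}
  \int_0^T\!\int_0^\delta V(x)|u(t,x)-\alpha|\,dxdt<\infty
\end{align*}
for some small $\delta>0$, together with the analogous statement near $x=1$.

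To establish this, I would apply Lemma \ref{lem:bd-1} to the mollified boundary entropy--flux pair $(F_\ve,Q_\ve)(\cdot,\alpha)$ from \eqref{eq:kuz-appro}, with test function $\vf(t,x)=\chi(t)\rho(x)$, where $\chi\in\cC_c^\infty(\bR)$ is nonnegative and $\equiv 1$ on $[0,T]$, and $\rho\in\cC_c^\infty((-\infty,1))$ is nonnegative, $\equiv 1$ on $[0,\delta]$, and supported in $(-\infty,2\delta]$. Since $|F_\ve(u,\alpha)|\le|u-\alpha|$, $|Q_\ve(u,\alpha)|\le M|u-\alpha|$ with $M=\sup|J'|$, and $u\in L^\infty$, the right-hand side in Lemma \ref{lem:bd-1} is bounded uniformly in $\ve$. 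Writing $G=V(u-\varrho)=V(u-\alpha)+V(\alpha-\varrho)$ on the left-hand side and moving the $(\alpha-\varrho)$ term across (bounded by $\iint V|\alpha-\varrho|\vf\,dxdt$, finite via \eqref{eq:ass-rho-l1} and the integrability of $V$ on $[\delta,2\delta]$, using $|\partial_uF_\ve|\le 1$), one obtains
\begin{align*}
  \iint_\Sigma V(u-\alpha)\,\partial_uF_\ve(u,\alpha)\,\vf\,dxdt\le C
\end{align*}
uniformly in $\ve>0$.

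The key structural observation is the sign property $(u-\alpha)\partial_uF_\ve(u,\alpha)\ge 0$ pointwise. Indeed, for $|u-\alpha|>2\ve$ the product equals $|u-\alpha|$; for $|u-\alpha|\le 2\ve$ it equals $\ve\cdot\tau g'(\tau)$ with $\tau=\ve^{-1}(u-\alpha)$, which is nonnegative since $g$ is even and convex (so $g'$ is odd and non-decreasing with $g'(0)=0$). Moreover $(u-\alpha)\partial_uF_\ve(u,\alpha)\to|u-\alpha|$ pointwise a.e.\ as $\ve\to 0$. Fatou's lemma then yields
\begin{align*}
  \iint V|u-\alpha|\vf\,dxdt\le C,
\end{align*}
which dominates the integral of $V|u-\alpha|$ over $(0,T)\times(0,\delta)$. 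The symmetric argument near $x=1$ with $\beta$ replacing $\alpha$ completes the proof.

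The main point that requires care is the uniformity in $\ve$: the constant $C_F$ in the proof of Lemma \ref{lem:bd-1} depends on $\ve$, so $\partial_uF_\ve(u,\alpha)(u-\varrho)\vf\in L^1(\Sigma;\nu)$ holds only $\ve$-wise via the $L^2$ energy bound \eqref{eq:eb}. However, we only let $\ve\to 0$ \emph{after} Lemma \ref{lem:bd-1} has been invoked for each fixed $\ve$, and then use the explicit shape of $F_\ve,Q_\ve$ together with the $L^\infty$-bound on $u$ to make the right-hand side uniformly bounded, so that the final Fatou step is legitimate. Extracting the positive piece $V|u-\alpha|\vf$ from the left-hand side (which is what makes the approach non-circular) depends crucially on the sign property above.
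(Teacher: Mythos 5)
Your proposal is correct and follows essentially the same route as the paper: reduce to bounding $\iint V|u-\alpha|$ near the boundary via \eqref{eq:ass-rho-l1}, invoke Lemma \ref{lem:bd-1} with the mollified pairs $(F_\ve,Q_\ve)(\cdot,\alpha)$ for each fixed $\ve$, isolate the nonnegative term $V(u-\alpha)\partial_uF_\ve(u,\alpha)\vf$ uniformly in $\ve$, and pass to the limit. The only cosmetic difference is that you conclude with Fatou's lemma where the paper uses the truncation $\mathbf1_{\{|u-\alpha|>2\ve\}}V|u-\alpha|\vf \le V\partial_uF_\ve(u,\alpha)(u-\alpha)\vf$ and monotone convergence; both steps are equivalent here.
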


\begin{proof}
Thanks to \eqref{eq:ass-rho-l1}, it suffices to prove for $y\in(0,1)$ that
\begin{align}\label{eq:l1-1}
  \int_0^T \int_0^y V(x)|u(t,x)-\alpha|\,dxdt < \infty,
\end{align}
and the similar bound for $\beta$.
Recall the functions $(F_\ve,Q_\ve)(\cdot,k)$ defined in \eqref{eq:kuz-appro} and observe that $(F_\ve,Q_\ve)$ forms a boundary entropy--flux pair for fixed $\ve$.
Pick some $T_*>T$ and $y_*\in(y,1)$, the previous lemma yields that
\begin{align}
  E_\vf^{(F_\ve,Q_\ve)(\cdot,\alpha)}(u) + \int_0^1 F_\ve(u_0,\alpha)\vf(0,\cdot)dx \ge 0,
\end{align}
for all $\vf\in\cC_c^2((-\infty,T_*)\times(-\infty,y_*))$ such that $\vf\ge0$.
Hence,
\begin{align}
  \sup_{\ve>0} \iint_\Sigma G(\cdot,\cdot,u)\partial_uF_\ve(u,\alpha)\vf\,dxdt < \infty.
\end{align}
Fix such a $\vf$ and decompose $G(\cdot,\cdot,u)\partial_uF_\ve(u,\alpha)\vf$ to
\begin{align}
  V\partial_uF_\ve(u,\alpha)(u-\alpha)\vf - V\partial_uF_\ve(u,\alpha)(\varrho-\alpha)\vf.
\end{align}
Since $|\partial_uF_\ve|$ is bounded by $1$ uniformly in $\ve$, by \eqref{eq:ass-rho-l1},
\begin{equation}
  \begin{aligned}
    &\left| \iint_\Sigma V\partial_uF_\ve(u,\alpha)(\varrho-\alpha)\vf\,dxdt \right|\\
    \le\,&\|\vf\|_{L^\infty(\Sigma)} \int_0^{T_*} \int_0^{y_*} V(x)|\varrho(t,x)-\alpha|\,dxdt
  \end{aligned}
\end{equation}
is bounded from above uniformly in $\ve$.
Therefore,
\begin{align}
  \sup_{\ve>0} \iint_\Sigma V\partial_uF_\ve(u,\alpha)(u-\alpha)\vf\,dxdt < \infty.
\end{align}
From the construction of $F_\ve$, $\partial_uF(u,\alpha)(u-\alpha)\ge0$ for $u\in\bR$ and $\partial_uF_\ve(u,\alpha)=\sgn(u-\alpha)$ if $|u-\alpha|>2\ve$.
Then, for each fixed $\ve$,
\begin{align}
  \mathbf1_{\{|u-\alpha|>2\ve\}} V|u-\alpha|\vf \le V\partial_uF_\ve(u,\alpha)(u-\alpha)\vf,
\end{align}
and in consequence,
\begin{align}
  \sup_{\ve>0} \iint_\Sigma \mathbf1_{\{|u-\alpha|>2\ve\}} V|u-\alpha|\vf\,dxdt < \infty.
\end{align}
Monotonic convergence theorem then yields that
\begin{align}
  \iint_\Sigma V(x)|u(t,x)-\alpha|\vf(t,x)dxdt < \infty.
\end{align}
The proof is concluded by choosing $\vf$ such that $\vf|_{(0,T)\times(0,y)}\equiv1$.
\end{proof}

\begin{rem}
Assume the conditions in Theorem \ref{thm:bd}.
Due to Proposition \ref{prop:l1}, \eqref{eq:ei} in Definition \ref{defn:in} can be generalized to
\begin{equation}
  \begin{aligned}
    &E_\vf^{(f,q)}(u) + \int_0^1 f(u_0)\vf(0,\cdot)dx\\
    \ge\;&q(\beta)\int_0^\infty \vf(t,1)dt - q(\alpha)\int_0^\infty \vf(t,0)dt,
  \end{aligned}
\end{equation}
for all Lax entropy--flux pairs $(f,q)$ and $\vf\in\cC_c^2(\bR^2)$ such that $\vf\ge0$.
The same generalization works for \eqref{eq:ei-kuz}.
\end{rem}


Now we can state the proof of Theorem \ref{thm:bd}.

\begin{proof}
Pick nonnegative functions $\phi\in\cC_c^2((0,T))$, $\psi\in\cC_c^2(\bR)$ and define $\vf=\phi(t)\psi(x)$.
From the previous remark,
\begin{align}
  E_\vf^{(f,q)}(u) + \big[\psi(0)q(\alpha)-\psi(1)q(\beta)\big]\int_0^T \phi(t)dt \ge 0.
\end{align}
There is a constant $C=C(f,\phi)$, such that
\begin{align}
  E_\vf^{(f,q)}(u) \le \iint_{\Sigma_T} \big[q(u)\psi'\phi + C(V|u-\varrho|+1)\psi\big]dxdt.
\end{align}
Due to the integrability proved in Proposition \ref{prop:l1},
\begin{equation}
  \begin{aligned}
    F_{q,\phi}(x) := &\int_0^\infty q(u(t,x))\phi(t)dt\\
    &- C\int_0^T \int_0^x \big(V(y)|u(t,y)-\varrho(t,y)|+1\big)dy,
  \end{aligned}
\end{equation}
is well-defined as a measurable function on $(0,1)$.
If $\psi(1)=0$,
\begin{align}
  \psi(0)q(\alpha)\int_0^T \phi(t)dt + \int_0^1 F_{q,\phi}(x)\psi'(x)dx \ge 0.
\end{align}
This holds for all nonnegative $\psi\in\cC_c^2((-\infty,1))$, so $F_{q,\phi}$ is non-increasing after possible modification on a Lebesgue null subset of $(0,1)$.
Hence,
\begin{align}
  \esslim_{x\to0+} F_{q,\phi}(x) = \esslim_{x\to0+} \int_0^\infty q(u(t,x))\phi(t)dt
\end{align}
exists for all $\phi\in\cC_c^\infty((0,T))$ such that $\phi\ge0$.
For all $(s,t) \subseteq (0,T)$, the result extends to $\phi=\mathbf1_{(s,t)}$ with standard argument.
The first equation in \eqref{eq:bd} then follows from \eqref{eq:l1-1} and the fact that $V$ is not integrable on any neighbor of $0$.
The second one is proved similarly.
\end{proof}

When $J$ is convex or concave, more information can be extracted from \eqref{eq:bd} by exploiting the idea in \cite{MOX21,DMOX22}.

\begin{proof}[Proof of Corollary \ref{cor:bd}]
Let $\mathcal Q$ be a countable set of functions such that \eqref{eq:bd} holds.
The choice of $\mathcal Q$ will be specified later.
Fix an interval $(s,t)$, there exists a subset $\mathcal E \subseteq (0,1)$ with Lebesgue measure $0$, such that
\begin{enumerate}
\item[(\romannum1)] $\|u(\cdot,x)\|_{L^\infty((s,t))} \le \|u\|_{L^\infty((s,t)\times(0,1))}$ for all $x\in(0,1)\backslash\mathcal E$;
\item[(\romannum2)]
$(t-s)q(\alpha) = \lim_{x\in(0,1)\backslash\mathcal E,x\to0+} \int_{(s,t)} q(u(r,x))dr$ for all $q\in\mathcal Q$.
\end{enumerate}
Denote $m=\|u\|_{L^\infty((s,t)\times(0,1))}$.
For any sequence $x_n\in(0,1)\backslash\mathcal E$ such that $x_n\to0$, we can find a subsequence $x'_n$ and a family $\{\mu_r\}_{r\in(s,t)}$ of probability measures, such that $\mu_r([-m,m])=1$ and for each $q\in\mathcal Q$,
\begin{align}\label{eq:bd-1-1}
  (t-s)q(\alpha) = \int_s^t \int_\bR q(z)\mu_t(dz)dr.
\end{align}
In other words, $u(\cdot,x'_n)$ converges to $\{\mu_r\}$ as $n\to\infty$ in the weak-$\star$ topology of $L^\infty((s,t))$.
To show \eqref{eq:bd-1}, we need to show that
\begin{align}\label{eq:bd-1-2}
  \mu_r(\{\alpha\})=1 \quad \text{for almost all}\ r\in(s,t).
\end{align}
For each rational number $\delta$, define
\begin{equation}
  \begin{aligned}
    &f_{-,\delta}(u) := \mathbf1_{u\le\delta}|u-\delta|, \quad q_{-,\delta}(u) := \mathbf1_{u\le\delta}(J(\delta)-J(u)),\\
    &f_{+,\delta}(u) := \mathbf1_{u\ge\delta}|u-\delta|, \quad q_{+,\delta}(u) := \mathbf1_{u\ge\delta}(J(u)-J(\delta)).
  \end{aligned}
\end{equation}
It is easy to show that we can choose $\mathcal Q$ to contain all $q_{\pm,\delta}$, so \eqref{eq:bd-1-1} holds for them.
Observe that \eqref{eq:bd-1-2} is straightforward if $J$ is monotonically increasing (or decreasing) on $[-m,m]$.
Indeed, suppose that $J'\ge0$ on $[-m,m]$.
For $\delta<\alpha$, $q_{-,\delta}(u)>q_{-,\delta}(\alpha)$ for $u<\delta$ and $q_{-,\delta}(u)=q_{-,\delta}(\alpha)$ for $u\ge\delta$.
Therefore, $\mu_r([-m,\delta))=0$.
Similarly, $\mu_r((\delta,m])=0$ for $\delta>\alpha$.
As $\delta$ can be any rational number, \eqref{eq:bd-1-2} holds.
The case $J$ is decreasing is similar.

Hereafter, we assume that $J$ is concave and attaches its maximum at $m_*\in[-m,m]$.
Suppose that $\alpha \le m_*$, by the argument above
\begin{align}\label{eq:bd-1-3}
  \mu_r\big([\alpha,\alpha_*]\big)=1 \quad \text{for almost all}\ r\in(s,t),
\end{align}
where $\alpha_*>\alpha$ is the only point that $J(\alpha)=J(\alpha')$.
For $\delta>\alpha_*$, $q_{-,\delta}(u) \le q_{-\delta}(\alpha)$ on $[\alpha,\alpha_*]$ with equality holds only for $u=\alpha$, $\alpha_*$.
Therefore, \eqref{eq:bd-1-3} holds with $[\alpha,\alpha_*]$ is replaced by $\{\alpha,\alpha_*\}$.
Finally, let $\mathcal Q$ also contain some Lax flux $q$ such that $q(\alpha_*)>q(\alpha)$ strictly, so \eqref{eq:bd-1-2} holds.
\end{proof}

\section{Existence of the entropy solution}
\label{sec:exis}

In this section, we fix some $T>0$ and construct an entropy solution on $\Sigma_T$ via the vanishing viscosity limit.
With the uniqueness proved in Theorem \ref{thm:uniq}, we obtain an entropy solution on $\Sigma$.
As before, we focus on the non-integrable case and then summarize the argument for the integrable case.

For the non-integrable case, assume that the conditions in Theorem \ref{thm:exis} hold.
For each $\ve>0$, the viscosity problem is constructed as
\begin{equation}\label{eq:bl-vis}
  \left\{
  \begin{aligned}
    &\partial_tu^\ve + \partial_x[J(u^\ve)] + G^\ve(t,x,u^\ve) = \ve\partial_x^2u^\ve, \quad (t,x)\in\Sigma_T,\\
    &u^\ve(0,x)=u_0^\ve(x), \quad u^\ve(t,0)=\alpha(t), \quad u^\ve(t,1)=\beta(t),
  \end{aligned}
  \right.
\end{equation}
where $G^\ve(t,x,u):=V(x)(u-\varrho^\ve(t,x))$ with $\varrho^\ve$ in Theorem \ref{thm:exis}, $u_0^\ve\in\cC^2([0,1])$ approximates $u_0$ in $L^2((0,1))$ and
\begin{align}
  u_0^\ve(0)=\alpha(0), \quad u_0^\ve(1)=\beta(0).
\end{align}
It admits a classical solution $u^\ve=u^\ve(t,x)$ that satisfies
\begin{itemize}
\item[\textnormal{(v1)}] $u^\ve-\varrho^\ve \in L^2(\Sigma_T;\nu)$, and
\item[\textnormal{(v2)}] for all $\vf\in\cC_c^2((-\infty,T)\times(0,1))$,
\begin{equation}
  \begin{aligned}
    \int_0^1 u_0^\ve\vf(0,\cdot)dx + \iint_{\Sigma_T} \big[u^\ve\partial_t\vf + \ve u^\ve\partial_x^2\vf + J(u^\ve)\partial_x\vf\big]dxdt\\
    = \iint_{\Sigma_T} G(\cdot,\cdot,u^\ve)\vf\,dxdt.
  \end{aligned}
\end{equation}
\end{itemize}
Some useful properties of $u^\ve$ are collected in Appendix \ref{sec:vis}.

\begin{thm}\label{thm:exis-non}
Along proper subsequence of $\ve\to0$, $u^\ve$ converges to some $u \in L^\infty(\Sigma_T)$ with respect to the weak-$\star$ topology of $L^\infty(\Sigma_T)$.
Furthermore, the limit point satisfies \textnormal{(EB)} for the given $T$ and \textnormal{(EI)} for all Lax entropy--flux pairs $(f,q)$ and $\vf\in\cC_c^2((-\infty,T)\times\bR)$ such that $\vf\ge0$.
\end{thm}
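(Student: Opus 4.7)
The plan is a standard vanishing viscosity argument, broken into three pieces: weak compactness and extraction of a limit $u$; lower semicontinuity to obtain the energy bound (EB); and a viscous entropy inequality whose limit yields (EI). The essential nontrivial input is the strong compactness of $\{u^\ve\}$, without which we cannot pass to the limit in the nonlinear terms $f(u^\ve)$ and $q(u^\ve)$.

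First I would collect from Appendix \ref{sec:vis} (and (v1)) the uniform bounds
\begin{align}
  \|u^\ve\|_{L^\infty(\Sigma_T)} \le C, \qquad \iint_{\Sigma_T} V(x)(u^\ve-\varrho^\ve)^2dxdt \le C,
\end{align}
plus the usual viscous entropy dissipation $\ve\iint_{\Sigma_T}|\partial_xu^\ve|^2dxdt\le C$, which one obtains by multiplying \eqref{eq:bl-vis} by $u^\ve$, integrating by parts and using hypothesis (\romannum2) on $\varrho^\ve$. The $L^\infty$ bound gives, along a subsequence, $u^\ve\rightharpoonup^* u$ in $L^\infty(\Sigma_T)$. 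Since $\varrho^\ve\to\varrho$ strongly in $L^2(\Sigma_T)$ and also in $L^2(\Sigma_T;\nu)$ by (\romannum2), the difference $u^\ve-\varrho^\ve$ converges weakly to $u-\varrho$ in $L^2(\Sigma_T;\nu)$, so weak lower semicontinuity of the $L^2(\nu)$-norm immediately yields (EB).

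For (EI), I would derive the viscous entropy inequality in the standard way: for any Lax entropy--flux pair $(f,q)$, multiplying \eqref{eq:bl-vis} by $f'(u^\ve)\vf$ with $\vf\in\cC_c^2((-\infty,T)\times(0,1))$, $\vf\ge0$, and integrating by parts gives
\begin{align}
  \int_0^1 f(u_0^\ve)\vf(0,\cdot)dx + \iint_{\Sigma_T}\big[f(u^\ve)\partial_t\vf + q(u^\ve)\partial_x\vf\big]dxdt\\
  \ge \iint_{\Sigma_T} f'(u^\ve)G^\ve(\cdot,\cdot,u^\ve)\vf\,dxdt - \ve\iint_{\Sigma_T} f(u^\ve)\partial_x^2\vf\,dxdt,
\end{align}
after dropping the nonnegative term $\ve\iint f''(u^\ve)(\partial_xu^\ve)^2\vf$. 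The viscosity remainder is $O(\ve)$ by the $L^\infty$ bound. The initial term passes to the limit since $u_0^\ve\to u_0$ and $f$ is Lipschitz on bounded sets. Because $\supp\vf$ is compactly contained in $\bR\times(0,1)$, the weight $V$ is bounded on $\supp\vf$; together with the uniform $L^2(\nu)$ bound and the strong convergence $\varrho^\ve\to\varrho$ in $L^2(\Sigma_T)$, the relaxation term converges provided we already have $u^\ve\to u$ strongly on $\supp\vf$. Thus everything reduces to strong (pointwise a.e.) convergence of $u^\ve$ on compact subsets of $\Sigma_T$.

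The hard step is precisely this strong compactness, which I would obtain by Tartar--Murat compensated compactness for scalar conservation laws. Concretely, for any Lax pair $(f,q)$ the viscous equation yields
\begin{align}
  \partial_t f(u^\ve)+\partial_x q(u^\ve) = \ve\partial_x^2 f(u^\ve) - \ve f''(u^\ve)(\partial_xu^\ve)^2 - f'(u^\ve)G^\ve(\cdot,\cdot,u^\ve).
\end{align}
The first term on the right is $\partial_x$ of something going to zero in $H^{-1}_{loc}$; the second is bounded in $L^1_{loc}$ by the dissipation estimate, hence precompact in $W^{-1,p}_{loc}$ for $1<p<2$ by the $L^\infty$ control of $f(u^\ve)$ and Murat's lemma; the third is bounded in $L^1_{loc}$ on $\supp\vf$ by Cauchy--Schwarz against $V(u^\ve-\varrho^\ve)^2$ and hence also enters Murat's lemma. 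Consequently $\partial_tf(u^\ve)+\partial_xq(u^\ve)$ is precompact in $H^{-1}_{loc}(\Sigma_T)$ for every Lax pair. The div-curl lemma applied to pairs of entropy--flux pairs gives the Tartar commutation relation, which for scalar equations forces the associated Young measure to be a Dirac mass and therefore $u^\ve\to u$ in $L^1_{loc}$ (and pointwise a.e.~along a subsequence). Once this is in hand, all remaining limits in the viscous entropy inequality pass through by dominated convergence, and (EI) is established.
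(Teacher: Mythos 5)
Your proposal is correct and follows essentially the same route as the paper: uniform $L^\infty$ and energy bounds from the appendix, the viscous entropy inequality, and Tartar--Murat compensated compactness via the div-curl lemma to force the Young measure to be a Dirac mass. The only difference is cosmetic ordering --- the paper first passes to measure-valued versions of (EB) and (EI) and then collapses the Young measure, whereas you establish strong $L^1_{loc}$ convergence first and then take limits classically; both are standard and equivalent here.
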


Recall that a Young measure $\mu=\{\mu_{t,x};(t,x)\in\Sigma_T\}$ is a family of probability measures on $\bR$ such that $(t,x) \mapsto \mu_{t,x}(A)$ is a measurable map from $\Sigma_T$ to $[0,1]$ for any Borel subset $A$ of $\bR$.
For continuous function $h$, define
\begin{align}
  \bar h: \Sigma_T \ni (t,x) \mapsto \int_0^1 h(z)\mu_{t,x}(dz).
\end{align}
In view of Lemma \ref{lem:vis-uniform-bd}, $\|u^\ve\|_{L^\infty(\Sigma_T)}$ is uniformly bounded.
According to the fundamental theorem of Young measure, we obtain a $\mu=\{\mu_{t,x};(t,x)\in\Sigma_T\}$ as a subsequential limit point of $u^\ve$ in the following sense: for all $h\in\cC(\bR)$ and $\vf \in L^1(\Sigma_T)$,
\begin{align}\label{eq:young}
  \lim_{\ve\to0} \iint_{\Sigma_T} h(u^\ve)\vf(t,x)dxdt = \iint_{\Sigma_T} \bar h(t,x)\vf(t,x)dxdt.
\end{align}
We also have $\mu_{t,x}([-m,m])=1$, where $m=\sup_{\ve>0} \|u^\ve\|_{L^\infty(\Sigma_T)}$.

\begin{proof}[Proof of Theorem \ref{thm:exis-non}]
First, from Lemma \ref{lem:vis-eb} and \cite[Proposition 4.1]{BertheV19},
\begin{align}\label{eq:eb-young}
  \iint_{\Sigma_T} V(x) \left[ \int_0^1 \big[z-\varrho(t,x)\big]^2\mu_{t,x}(dz) \right] dxdt< \infty.
\end{align}
For all Lax entropy--flux pairs $(f,q)$, from \eqref{eq:bl-vis} we have
\begin{equation}
  \begin{aligned}
    \partial_t[f(u^\ve)] + \partial_x[q(u^\ve)] &= f'(u^\ve)\big\{\partial_tu^\ve+\partial_x[J(u^\ve)]\big\}\\
    &= \ve f'(u^\ve)\partial_x^2u^\ve - f'(u^\ve)G^\ve(\cdot,\cdot,u^\ve).
  \end{aligned}
\end{equation}
Since $f''\ge0$, $f'(u^\ve)\partial_x^2u^\ve \le \ve\partial_x^2[f(u^\ve)]$.
Therefore,
\begin{align}
  \partial_t[f(u^\ve)] + \partial_x[q(u^\ve)] + f'(u^\ve)G^\ve(\cdot,\cdot,u^\ve) \le \ve\partial_x^2[f(u^\ve)].
\end{align}
Recall the entropy product defined in \eqref{eq:ent-prod}.
For $\vf\in\cC_c^2((-\infty,T)\times(0,1))$ such that $\vf\ge0$, we have
\begin{equation}
  \begin{aligned}
    E_\vf^{(f,q)}(u^\ve) + \int_0^1 f(u_0^\ve)\vf(0,\cdot)dx \ge\\
    \ve\iint_{\Sigma_T} \partial_x[f(u^\ve)]\partial_x\vf\,dxdt - \iint_{\Sigma_T} f'(u^\ve)V(\varrho^\ve-\varrho)\vf\,dxdt.
  \end{aligned}
\end{equation}
In view of the condition (\romannum1) in Theorem \ref{thm:exis} and Lemma \ref{lem:vis-eb}, the two terms in the right-hand side vanish as $\ve\to0$.
We then obtain from \eqref{eq:young} that
\begin{equation}\label{eq:ei-young}
  \begin{aligned}
  &\iint_{\Sigma_T} \left[ \bar f\partial_t\vf + \bar q\partial_x\vf - \big(\bar g-\overline{f'}\varrho\big)V\vf \right] dxdt\\
  \ge\,&- \int_0^1 f(u_0)\vf(0,\cdot)dx, \quad \text{where}\ g(u):=uf'(u).
  \end{aligned}
\end{equation}

Observe that \eqref{eq:eb-young} and \eqref{eq:ei-young} can be viewed as the measure-valued version of \eqref{eq:eb} and \eqref{eq:ei}, respectively.
Hence, the main task is to show that, the Young measure $\mu$ is concentrated on some $u \in L^\infty(\Sigma_T)$:
\begin{align}\label{eq:delta-young}
  \mu_{t,x}(dz) = \delta_{u(t,x)}(dz) \quad \text{for almost all}\ (t,x)\in\Sigma_T.
\end{align}
To do this, we exploit the \emph{compensated compactness} argument, see, e.g., \cite[Section 5.D]{Evans90}.
Define two sequences $\Phi_\ve$, $\Psi_\ve: \Sigma_T\to\bR^2$ by
\begin{align}
  \Phi_\ve:=(f(u^\ve),q(u^\ve)), \quad \Psi_\ve:=(-J(u^\ve),u^\ve).
\end{align}
Since $\{\Phi_\ve; \ve>0\}$ and $\{\Psi_\ve; \ve>0\}$ are bounded, $\{\mathrm{div}\Phi_\ve; \ve>0\}$ and $\{\mathrm{curl}\Psi_\ve; \ve>0\}$ are bounded in $W^{-1,p}(\Sigma_T)$ for any $p>2$.
Notice that
\begin{equation}
  \begin{aligned}
    \mathrm{div}\Phi_\ve &= \partial_t[f(u^\ve)]+\partial_x[q(u^\ve)]\\
    &= \ve\partial_x^2[f(u^\ve)]-\ve f''(u^\ve)(\partial_xu^\ve)^2-Vf'(u^\ve)(u^\ve-\varrho^\ve);\\
    \mathrm{curl}\Psi_\ve &= \partial_tu^\ve+\partial_x[J(u^\ve)] = \ve\partial_x^2u^\ve-V(u^\ve-\varrho^\ve).
  \end{aligned}
\end{equation}

Fix any $\delta>0$ and define $\Sigma_T^\delta = (\delta,T-\delta)\times(\delta,1-\delta)$.
We claim that both $\{\mathrm{div}\Phi_\ve;\ve>0\}$ and $\{\mathrm{curl}\Psi_\ve;\ve>0\}$ are precompact in $H^{-1}(\Sigma_T^\delta)$.
Indeed, we have seen from Lemma \ref{lem:vis-eb} that $\ve\partial_x^2[f(u^\ve)]$ vanishes as $\ve\to0$ in $H^{-1}(\Sigma_T^\delta)$ and $\{\ve f''(u^\ve)(\partial_xu^\ve)^2;\ve>0\}$ is a bounded sequence in $L^1(\Sigma_T^\delta)$.
On the other hand, as $V \le C_\delta$ on $[\delta,1-\delta]$, $\{Vf'(u^\ve)(u^\ve-\varrho^\ve);\ve>0\}$ is also a bounded sequence in $L^1(\Sigma_T^\delta)$.
Thanks to \cite[Corollary 1.C.1]{Evans90}, the claim holds for $\{\mathrm{div}\Phi_\ve;\ve>0\}$.
For $\{\mathrm{curl}\Psi_\ve;\ve>0\}$, the argument is similar.

Now, the Div-Curl lemma \cite[Theorem 5.B.4]{Evans90} yields that
\begin{align}
  \lim_{\ve\to0} \big(\Phi_\ve \cdot \Psi_\ve\big) = (\bar f,\bar q) \cdot (-\bar J,\bar u) = \bar u\bar q-\bar J\bar f,
\end{align}
weakly as distributions on $\Sigma_T^\delta$.
Meanwhile, \eqref{eq:young} with $h=zq(z)-J(z)f(z)$ gives us that for all $\vf \in L^1(\Sigma_T^\delta)$,
\begin{align}
  \lim_{\ve\to0} \iint_{\Sigma_T^\delta} \big(\Phi_\ve \cdot \Psi_\ve\big)\vf\,dxdt = \iint_{\Sigma_T^\delta} \bar h\vf\,dxdt.
\end{align}
Hence, the \emph{Tartar's factorization} holds almost everywhere in $\Sigma_T^\delta$:
\begin{align}\label{eq:tartar}
  \int_0^1 (J-\bar J)(f-\bar f)d\mu_{t,x}=\int_0^1 (z-\bar u)(q-\bar q)d\mu_{t,x}.
\end{align}
As $\delta>0$ is arbitrary, we obtain \eqref{eq:tartar} for all Lax entropy--flux pairs $(f,q)$ and almost all $(t,x)\in\Sigma_T$.
Standard argument then proves \eqref{eq:delta-young}.
\end{proof}

For the integrable case, the approach is slightly different.
Assume that $V \in L^1((0,1))$ and $(\varrho,\alpha,\beta,u_0)$ are essentially bounded functions.
For $\ve>0$, pick $\varrho^\ve\in\cC^2(\Sigma_T)$, $\alpha^\ve$, $\beta^\ve\in\cC^2([0,T])$ and $u_0^\ve\in\cC^2([0,1])$ as a mollification of $\varrho$, $\alpha$, $\beta$ and $u_0$:
\begin{equation}
  \begin{aligned}
    \lim_{\ve\to0} \bigg\{ \int_0^1 (u_0^\ve-u_0)^2dx + \iint_{\Sigma_T} V(\varrho^\ve-\varrho)^2dxdt \hspace{10mm}\\
    +\int_0^T \big[(\alpha^\ve-\alpha)^2 + (\beta^\ve-\beta)^2\big]dt \bigg\} = 0,
  \end{aligned}
\end{equation}
and for each $\ve>0$,
\begin{equation}
  \begin{aligned}
    &\varrho^\ve(t,0)=\alpha^\ve(t), \quad \varrho^\ve(t,1)=\beta^\ve(t), \quad \forall\,t\in[0,T],\\
    &u_0^\ve(0)=\alpha^\ve(0), \quad u_0^\ve(1)=\beta^\ve(0).
  \end{aligned}
\end{equation}
The viscosity problem for integrable case reads
\begin{equation}
  \left\{
  \begin{aligned}
    &\partial_tu^\ve + \partial_x[J(u^\ve)] + G^\ve(t,x,u^\ve) = \ve\partial_x^2u^\ve,\\
    &u^\ve(0,x)=u_0^\ve(x), \quad u^\ve(t,0)=\alpha^\ve(t), \quad u^\ve(t,1)=\beta^\ve(t).
  \end{aligned}
  \right.
\end{equation}
where $G^\ve(t,x,u):=V(x)(u-\varrho^\ve(t,x))$.

Let $u^\ve$ be the classical solution and consider the limit $\ve\to0$ as in the non-integrable case.
To deal with the discontinuities formulated at the boundaries in this limit procedure, define for each $\ve>0$ that
\begin{align}
  g_\ve(x):=
  \begin{cases}
    1-e^{-\frac x\ve}, &x\in[0,\frac12),\\
    1-e^{-\frac{1-x}\ve}, &x\in(\frac12,1].
  \end{cases}
\end{align}
For boundary entropy--flux $(F,Q)$ and $k\in\bR$, denote $(f,q)=(F,Q)(\cdot,k)$.
For $\vf\in\cC_c^2((-\infty,T)\times\bR)$, let $\vf_\ve=\vf g_\ve$ and observe that
\begin{equation}
  \begin{aligned}
    \mathcal E(\ve) := E_{\vf_\ve}^{(f,q)}(u^\ve) - \iint_{\Sigma_T} q(u^\ve)\vf g'_\ve\,dxdt =\\
    \iint_{\Sigma_T} \big[f(u^\ve)\partial_t\vf+q(u^\ve)\partial_x\vf-f'(u^\ve)G(\cdot,\cdot,u^\ve)\big]g_\ve\,dxdt.
  \end{aligned}
\end{equation}
Following the manipulation in \cite[Theorem 2.8.4]{MNRR96}, we show that
\begin{equation}
  \begin{aligned}
    \liminf_{\ve\to0} \left\{ \int_0^1 f(u_0)\vf(0,\cdot)dx + \mathcal E(\ve) \right\} \hspace{25mm}\\
    \ge -\,M\int_0^T \big[f(\alpha)\vf(\cdot,0) + f(\beta)\vf(\cdot,1)\big]dt,
  \end{aligned}
\end{equation}
when $\vf\ge0$.
From this, we obtain the measure-valued version of \eqref{eq:ei-bd} for the subsequential weak-$\star$ limit of $u^\ve$.
The application of compensated compactness argument is exactly the same as in the non-integrable case.

\appendix

\section{The parabolic problem with non-integrable $V$}
\label{sec:vis}

Assume the conditions in Theorem \ref{thm:exis}.
Without loss of generality, assume that $0 \le \varrho^\ve$, $u_0^\ve \le 1$ on $[0,T]\times[0,1]$.
Let $u^\ve$ be the solution to the parabolic equation \eqref{eq:bl-vis}.
We collect and prove some useful estimates for $u^\ve$.

\begin{lem}\label{lem:vis-uniform-bd}
For all $\ve>0$ and $(t,x)\in\Sigma_T$, $0 \le u^\ve \le 1$.
\end{lem}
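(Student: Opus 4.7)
The plan is to prove the two-sided bound by a classical parabolic maximum/minimum principle applied to $w = u^\ve - 1$ and to $-u^\ve$ respectively. Since $u^\ve$ is a classical solution of the uniformly parabolic equation \eqref{eq:bl-vis} on $\Sigma_T$ with continuous data, it extends continuously to $\overline{\Sigma_T}$, and the parabolic boundary values satisfy $0 \le u_0^\ve \le 1$ on $\{0\}\times[0,1]$ together with $0 \le \alpha,\beta \le 1$ on $[0,T]\times\{0,1\}$ (this follows from the normalization $0 \le \varrho^\ve \le 1$ and the compatibility $u_0^\ve(0)=\alpha(0)$, $u_0^\ve(1)=\beta(0)$ together with the analogous normalization for $\alpha,\beta$, which may be imposed without loss of generality since by assumption we can replace $\varrho^\ve,u_0^\ve$ by their truncations to $[0,1]$).

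For the upper bound, set $w := u^\ve - 1$ and, to avoid the technicality at $t=T$, consider the perturbation $w_\lambda := w - \lambda t$ for $\lambda > 0$. Since $w \le 0$ on the parabolic boundary, the same holds for $w_\lambda$ there. If $\sup_{\overline{\Sigma_T}} w_\lambda > 0$, then by continuity this supremum is attained at some interior point $(t_0,x_0)$ with $t_0 \in (0,T]$ and $x_0 \in (0,1)$. At such a point one has $\partial_t w_\lambda \ge 0$, $\partial_x w = \partial_x w_\lambda = 0$, and $\partial_x^2 w = \partial_x^2 w_\lambda \le 0$; using the PDE and the fact that $u^\ve(t_0,x_0) > 1 \ge \varrho^\ve(t_0,x_0)$, I compute
\begin{equation}
\partial_t w_\lambda = \ve \partial_x^2 w - J'(u^\ve)\partial_x w - V(x_0)\bigl(u^\ve-\varrho^\ve\bigr) - \lambda \le -V(x_0)\bigl(u^\ve-\varrho^\ve\bigr) - \lambda < 0,
\end{equation}
which contradicts $\partial_t w_\lambda \ge 0$. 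Hence $\sup w_\lambda \le 0$ for every $\lambda > 0$, and letting $\lambda \to 0$ gives $u^\ve \le 1$ throughout $\overline{\Sigma_T}$.

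For the lower bound, the symmetric argument applies to $-u^\ve$: at a would-be interior minimum $(t_0,x_0)$ of $u^\ve$ where $u^\ve < 0 \le \varrho^\ve$, the PDE together with $\partial_x u^\ve = 0$ and $\partial_x^2 u^\ve \ge 0$ forces $\partial_t u^\ve > 0$, contradicting the minimality in $t$. The perturbation $u^\ve + \lambda t$ handles the $t=T$ boundary as before.

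The only nontrivial point is a careful treatment of what happens at the two singular endpoints $x \in \{0,1\}$, but this disappears because $u^\ve$ satisfies pointwise Dirichlet data there (so the maximum principle is applied on the closed rectangle $[0,T]\times[0,1]$ and the behavior of $V$ at the boundary is irrelevant; in the interior $V$ is locally bounded, so the equation is a standard quasilinear parabolic one). The only place where $V(x_0)$ enters the argument is through the \emph{sign} of $V$, which is positive by hypothesis (h2), giving the crucial strict inequality in the reaction term. This makes the argument essentially identical to the classical comparison principle for reaction–diffusion equations and the proof is short.
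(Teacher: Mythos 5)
Your proof is correct, but it takes a genuinely different route from the paper's. The paper argues by energy estimates: it tests the equation against $v=(u^\ve-1)^+$ (resp.\ $(u^\ve)^-$), uses $\langle G(\cdot,\cdot,u^\ve),v\rangle\ge0$ together with an integration by parts on the flux term, and concludes $v\equiv0$ via Gr\"onwall's inequality; because that argument must control $\langle\partial_x[J(u^\ve)],v\rangle$ globally, the paper adds a truncation step replacing $J$ by a globally Lipschitz $J_*$ that agrees with $J$ on $[0,1]$. Your pointwise maximum-principle argument avoids both of these devices: the transport term is multiplied by $\partial_x u^\ve=0$ at the critical point, so no growth condition on $J$ is needed, and the $\lambda t$ perturbation replaces Gr\"onwall. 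The price is that you lean harder on classical regularity --- you need $u^\ve$ continuous on the closed rectangle and $C^{1,2}$ in the interior so that the supremum is attained and the derivative tests apply; this is consistent with the paper's standing assumption that $u^\ve$ is a classical solution attaining the Dirichlet data, and, as you correctly note, the singularity of $V$ at $x=0,1$ is harmless because the contradiction is derived at an interior point where $V$ is finite and only its positivity enters. Both proofs ultimately rest on the normalization $0\le\varrho^\ve,u_0^\ve\le1$ stated at the start of the appendix, and on $\alpha,\beta$ taking values in $[0,1]$, which follows from the compatibility $\varrho^\ve(\cdot,0)=\alpha$, $\varrho^\ve(\cdot,1)=\beta$.
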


\begin{proof}
Use the short notation $\langle \cdot,\cdot \rangle$ to denote the inner product in $L^2(\Sigma_T)$.
First assume that $J$ is globally Lipschitz continuous:
\begin{equation*}
  |J(u)-J(u')| \le M|u-u'|.
\end{equation*}
Let $v=(u^\ve-1)^+$ and note that $v|_{t=0}=v|_{x=0,1}=0$.
Hence,
\begin{equation*}
  \begin{aligned}
    \langle \partial_tu^\ve, v \rangle &= \langle \partial_tv, v \rangle = \frac12\int_0^1 v^2(T,x)dx,\\
    \langle \partial_x[J(u^\ve)], v \rangle &= -\langle J(u)-J(1), \partial_xv \rangle \ge -M\langle |u-1|, |\partial_xv| \rangle\\
    &= -M\langle v, |\partial_xv| \rangle \ge -\ve\|\partial_xv\|_{L^2(\Sigma_T)}^2 - \frac M{4\ve} \|v\|_{L^2(\Sigma_T)}^2,\\
    \ve\langle \partial_x^2u^\ve, v \rangle &= \ve\langle \partial_x^2v, v \rangle = -\ve\|\partial_xv\|_{L^2}^2.
  \end{aligned}
\end{equation*}
Testing \eqref{eq:bl-vis} with $v$, as $\langle G(\cdot,\cdot,u),v \rangle \ge 0$,
\begin{equation*}
  \int_0^1 v^2(T,x)dx \le \frac M{2\ve} \|v\|_{L^2(\Sigma_T)}^2 = \frac M{2\ve}\iint_{\Sigma_T} v^2(t,x)dxdt.
\end{equation*}
\Gro inequality shows that $v(T,\cdot)\equiv0$, i.e., $u^\ve(T,\cdot)\le1$ almost everywhere.
Similar argument with $v=(u^\ve)^-$ shows that $u^\ve(T,\cdot)\ge0$.
Since $T$ appeared above can be replaced with any $t\in(0,T)$, $0 \le u^\ve(t,x) \le 1$ for all $(t,x)\in\Sigma_T$.

If $J$ is not globally Lipschitz continuous, construct $J_*$, such that $J_*=J$ on $[0,1]$ and $J_*$ is globally Lipschitz continuous.
The above proof shows that we can replace $J$ with $J_*$ and the solution would not be affected.
\end{proof}

\begin{lem}\label{lem:vis-eb}
Recall that $\|\cdot\|_{L^2(\Sigma_T)}$ is the $L^2$ norm with respect to the Lebesgue measure, while $\|\cdot\|_{L^2(\Sigma_T;\nu)}$ is the $L^2$ norm with respect to $d\nu=V(x)dxdt$.
\begin{align}\label{eq:vis-eb}
  \sup_{\ve>0} \left\{ \ve\|\partial_xu^\ve\|_{L^2(\Sigma_T)}^2 + \|u^\ve-\varrho\|_{L^2(\Sigma_T;\nu)}^2 \right\} < \infty.
\end{align}
\end{lem}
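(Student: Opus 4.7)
The plan is to derive \eqref{eq:vis-eb} by a direct energy estimate: test the viscous equation \eqref{eq:bl-vis} against $w^\ve := u^\ve - \varrho^\ve$. Since $u^\ve(t,0)=\alpha(t)$, $u^\ve(t,1)=\beta(t)$ and $\varrho^\ve$ is taken to satisfy the same boundary values (the natural choice in building a smooth approximation to $\varrho$ under the hypotheses of Theorem \ref{thm:exis}), $w^\ve$ vanishes on $[0,T]\times\{0,1\}$, so every integration by parts in $x$ produces no lateral boundary term.

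Rewriting \eqref{eq:bl-vis} as $\partial_t w^\ve + \partial_x[J(u^\ve)] + V w^\ve = \ve\,\partial_x^2 u^\ve - \partial_t\varrho^\ve$ and taking the $L^2((0,1))$-inner product with $w^\ve$ and integrating over $[0,t]$ with $t\le T$, the relaxation term contributes the desired dissipation $\|w^\ve\|_{L^2(\Sigma_t;\nu)}^2$; the viscous term contributes, after integration by parts, $\ve\|\partial_x u^\ve\|_{L^2(\Sigma_t)}^2 - \ve\iint_{\Sigma_t}\partial_x u^\ve\,\partial_x\varrho^\ve\,dxdr$; the convective term contributes $\int_0^t[\Psi(\beta)-\Psi(\alpha)]\,dr - \iint_{\Sigma_t} J(u^\ve)\,\partial_x\varrho^\ve\,dxdr$ where $\Psi' = J$; and $\partial_t$ contributes $\tfrac12\|w^\ve(t,\cdot)\|_{L^2}^2 - \tfrac12\|w^\ve(0,\cdot)\|_{L^2}^2 - \iint_{\Sigma_t} w^\ve\,\partial_t\varrho^\ve\,dxdr$. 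Every term other than the cross $\ve\iint\partial_x u^\ve\,\partial_x\varrho^\ve$ is bounded uniformly in $\ve$ by Lemma \ref{lem:vis-uniform-bd} ($L^\infty$ bound on $u^\ve$, hence on $J(u^\ve)$ and $w^\ve$) together with condition (\romannum2) of Theorem \ref{thm:exis} ($H^1$ bound on $\varrho^\ve$). The cross term is tamed by Young's inequality, $\ve\,|\partial_x u^\ve\,\partial_x\varrho^\ve| \le \tfrac{\ve}{2}(\partial_x u^\ve)^2 + \tfrac{\ve}{2}(\partial_x\varrho^\ve)^2$, and the first half is absorbed on the left.

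Reassembling gives, uniformly in $\ve\in(0,1]$ and $t\in[0,T]$,
\begin{equation*}
  \tfrac12\|w^\ve(t,\cdot)\|_{L^2((0,1))}^2 + \|w^\ve\|_{L^2(\Sigma_t;\nu)}^2 + \tfrac{\ve}{2}\|\partial_x u^\ve\|_{L^2(\Sigma_t)}^2 \le C,
\end{equation*}
which already bounds $\sqrt{\ve}\,\|\partial_x u^\ve\|_{L^2(\Sigma_T)}$ and $\|w^\ve\|_{L^2(\Sigma_T;\nu)}$. The triangle inequality $(u^\ve-\varrho)^2\le 2(w^\ve)^2 + 2(\varrho^\ve-\varrho)^2$, combined with the last part of condition (\romannum2) of Theorem \ref{thm:exis}, which says $\sup_\ve\iint_{\Sigma_T} V(\varrho^\ve-\varrho)^2\,dxdt<\infty$, then transfers the weighted $L^2$ bound from $\varrho^\ve$ to $\varrho$, producing \eqref{eq:vis-eb}.

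The main obstacle will be the cross coupling $\ve\iint\partial_x u^\ve\,\partial_x\varrho^\ve$, where the smallness of $\ve$ has to compete with the a priori uncontrolled $L^2$ norm of $\partial_x u^\ve$; Young's inequality followed by absorption into the dissipation is what closes the estimate. A secondary subtlety is the tacit assumption that $\varrho^\ve$ matches the boundary data $\alpha(t),\beta(t)$: if the approximation is not built to satisfy this, one tests instead against $u^\ve - \tilde\varrho$ for a fixed smooth extension $\tilde\varrho(t,x)$ of $(\alpha,\beta)$, and controls the additional error $V(\tilde\varrho-\varrho^\ve)^2$ by the compatibility \eqref{eq:ass-rho-l2} together with (\romannum2).
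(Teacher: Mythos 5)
Your proposal is correct and follows essentially the same route as the paper: test \eqref{eq:bl-vis} with $w^\ve=u^\ve-\varrho^\ve$, use the vanishing of $w^\ve$ on the lateral boundary to integrate by parts without boundary contributions, absorb the viscous cross term by Young's inequality, and transfer the weighted bound from $\varrho^\ve$ to $\varrho$ via condition (\romannum2). The only minor difference is that the paper does not need your fallback for boundary matching: it observes that condition (\romannum2) together with \eqref{eq:ass-rho-l2} and the non-integrability of $V$ already forces the continuous function $\varrho^\ve$ to satisfy $\varrho^\ve(t,0)=\alpha(t)$, $\varrho^\ve(t,1)=\beta(t)$.
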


\begin{proof}
First, from condition (\romannum2) in Theorem \ref{thm:exis}, the continuous function $\varrho^\ve$ satisfies \eqref{eq:ass-rho-l2}, so that $\varrho^\ve(t,0)=\alpha(t)$, $\varrho^\ve(t,1)=\beta(t)$ for all $t\in[0,T]$.
Also, thanks to (\romannum2), it suffices to verify \eqref{eq:vis-eb} with $\varrho$ replaced by $\varrho^\ve$.

Let $w^\ve=u^\ve-\varrho^\ve$.
As both $u^\ve$ and $\varrho^\ve$ are uniformly bounded,
\begin{equation*}
  \begin{aligned}
    \langle \partial_tu^\ve, w^\ve \rangle &= \frac12\int_0^1 (u^\ve-\varrho^\ve)^2\big|_{t=0}^{t=T}\,dx + \langle \partial_t\varrho^\ve, u^\ve-\varrho^\ve \rangle\\
    &\ge -\,C(1 + \|\partial_t\varrho^\ve\|_{L^2(\Sigma_T)}),\\
    \langle \partial_x[J(u^\ve)], w^\ve \rangle &= \int_0^T [\mathcal J(\cdot,\cdot,u^\ve)]\big|_{x=0}^{x=1}\,dt + \langle \partial_x\varrho^\ve, J(u^\ve) \rangle\\
    &\ge -\,C(1+\|\partial_x\varrho^\ve\|_{L^2(\Sigma_T)}),
  \end{aligned}
\end{equation*}
where $\mathcal J=\mathcal J(t,x,u)$ is given by
\begin{align*}
  \mathcal J(t,x,u) := \int_0^u wJ'(w)dw-\varrho^\ve(t,x)J(u).
\end{align*}
Noting that $w^\ve|_{x=0,1}=0$,
\begin{equation*}
  \begin{aligned}
    \langle \partial_x^2u^\ve, w^\ve \rangle = \langle \partial_xu^\ve, - \partial_xw^\ve \rangle &= \langle \partial_xu^\ve,\partial_x\varrho^\ve \rangle - \|\partial_xu^\ve\|_{L^2(\Sigma_T)}^2\\
    &\le \frac12\|\partial_x\varrho^\ve\|_{L^2(\Sigma_T)}^2 - \frac12\|\partial_xu^\ve\|_{L^2(\Sigma_T)}^2.
  \end{aligned}
\end{equation*}
Testing the equation with $w^\ve$, we get
\begin{equation*}
  \frac\ve2\|\partial_xu^\ve\|_{L^2(\Sigma_T)}^2 + \iint_{\Sigma_T} V(x)(w^\ve)^2dxdt \le C(1+\|\varrho^\ve\|_{H^1(\Sigma_T)}).
\end{equation*}
The estimate holds since $\varrho^\ve$ is uniformly bounded in $H^1(\Sigma_T)$.
\end{proof}

\titleformat{\section}[hang]	
{\bfseries\large}{}{0em}{}[]
\titlespacing*{\section}{0em}{2em}{1.5em}

\titleformat{\subsection}[runin]
{\bfseries\normalsize}{}{0em}{}[.]

\section{Declarations}

\subsection{Acknowledgements}
The author greatly thanks Stefano Modena for the inspiring discussion and advices.

%
%
%

\section{References}	
\renewcommand{\section}[2]{}
\bibliography{bibliography.bib}



\vspace{2em}
\noindent{\large Lu \textsc{Xu}}

\vspace{0.5em}
\noindent Gran Sasso Science Institute\\
Viale Francesco Crispi 7, L'Aquila, Italy\\
{\tt lu.xu@gssi.it}

\end{document}